\theoremstyle{plain}
\newtheorem{lemma}{Lemma}
\newtheorem{corollary}{Corollary}
\newtheorem{proposition}{Proposition}
\theoremstyle{definition}
\theoremstyle{remark}
\def\1{{\bf 1}}
\newcommand{\nc}{\newcommand}
\newcommand{\Z}{\mathbb{Z}}
\newcommand{\C}{\mathbb{C}}
\numberwithin{equation}{section} 
\newcommand{\nn}{\nonumber \\}
\newcommand{\wt}{\mbox{\rm wt}\ }
\newcommand{\N}{\mathbb{N}}
\newcommand{\M}{\mathcal{M}}
\newcommand{\F}{\mathcal{G}}
\newcommand{\Po}{\mathcal P}
\newcommand{\one}{\mathbf{1}}
\begin{document}
\title[The extension of cochain complexes of meromorphic functions]     
{The extension of cochain complexes of meromorphic functions to multiplications}   
     
\author{Daniel Levin${}^\&$}
\address{ ${}^\&$ Mathematical Institute \\ Uiversity of Oxford \\ Andrew Wiles Building \\
 Radcliffe Observatory Quarter (550) \\ Woodstock Road \\ Oxford \\ OX2 6GG \\ United Kingdom  }     
\author{Alexander Zuevsky${}^*$}
\address{ ${}^*$ Institute of Mathematics \\ Czech Academy of Sciences\\ Zitna 25, 11567 \\ Prague\\ Czech Republic}

\email{levindanie@gmail.com}

\email{zuevsky@yahoo.com}





\begin{abstract}
 Let $\mathfrak g$ be an infinite-dimensional Lie algebra and 
 $G$ be the algebraic completion of its module.  
Using a geometric interpretation in terms of sewing two Riemann spheres with 
a number of marked points, 
we introduce a 
multiplication between elements of two spaces $\M^k_m(\mathfrak g, G)$
and $\M^n_{m'}(\mathfrak g, G)$ of meromorphic functions 
depending on a number of formal complex parameters $(x_1, \ldots, x_k)$ and 
$(y_1, \ldots, y_n)$  
  with specific analytic and symmetry properties, and 
associated to $\mathfrak g$-valued series. 
These spaces form a chain-cochain complex 
with respect to a boundary-coboundary operator. 
The main result of the paper shows that 
 the multiplication is defined by an   
absolutely convergent series and 
 takes values in the space $\M^{k+n}_{m+m'}(\mathfrak g, G)$.  
\end{abstract}
\keywords{Meromorphic functions with specific analytic properties; 
Riemann surfaces; 
chain-cochain complexes} 

\vskip12pt  

\maketitle
\section{Introduction: Motivation and geometrical interpretation}
\label{introduction}
For purposes of construction of non-trivial cohomology classes 
for an infinite-dimensional Lie algebra $\mathfrak g$ \cite{K} 
on manifolds 
it is important to define a multiplication of elements of cochain complex spaces of meromorphic 
functions with predetermined analytic properties and depending on $\mathfrak g$-series.  
Predetermined functions can be parameterized by formal complex parameters associated to local coordinates 
of marked points on Riemann spheres. 
In this paper we introduce the multiplication of elements 
of double cochain complex $(\M^n_m(\mathfrak g, G)$, $\delta^n_m)$-spaces by 
 involving the geometrical procedure \cite{Y} of sewing two Riemann spheres. 
Such multiplication is then  
parameterized by a nonzero complex number $\epsilon$ which is the complex parameter of 
the Riemann spheres sewing.  
For two chain complex spaces  
 $\M^k_m(\mathfrak g, G)$ and $\M^n_{m'}(\mathfrak g, G)$, 
 we associate formal complex parameters  
$(x_1, \ldots, x_k)$ and $(y_1, \ldots, y_n)$ 
to local coordinates vanishing on $k$ and $n$ marked points on two Riemann spheres. 
The sewing brings about another Riemann sphere,  
and formal complex parameters of predetermined meromorphic functions 
of the space 
$\M^{k+n}_{m+m'}$ are identified with 
parameters of the resulting sphere.  
The problem of defining a multiplication of elements of cochain complex spaces 
is very important for cohomological problems in conformal field theory \cite{FMS, TUY}, 
infinite-dimensional Lie algebras \cite{K},  
the theory of integrable models, 
 as well as for further applications to 
cohomologies of smooth manifolds \cite{Fuks1}.   
The plan of the paper is the following. 
In Section \ref{valuedef}, for an infinite-dimensional Lie algebra, 
we describe the axiomatics of meromoprhic functions with predetermined analytic properties.  
The cochain complex spaces $\M^._.$ 
of predetermined meromorphic functions are then defined. 
In Section \ref{application}
we describe first the geometric procedure of forming a Riemann sphere by sewing 
two initial Riemann spheres. 
A multiplication for elements of two $\M^._.$-spaces is introduced. 
The main result of this paper, namely, Proposition \ref{tolsto} showing that the product of 
 elements of spaces $\M^k_m$ and $\M^n_{m'}$ belongs to $\M^{k+n}_{m+m'}$ is proven.  
In particular, we show that functions obtained as a result of multiplication of elements of 
spaces of predetermined meromorphic functions are absolutely convergent meromorphic functions 
with predetermined analytic properties. 
\section{The cochain complex spaces $\M^n_m(\mathfrak g, G)$}   
\label{valuedef}
Let $\mathfrak g$ be an infinite-dimensional Lie algebra, and $W$ its module. 
Denote by $F_n \mathbb C$ the  
configuration space of $n \ge 1$ ordered formal complex parameters in $\mathbb C^n$, 
 $F_n\mathbb C=\{(z_1, \ldots, z_n) \in \mathbb C^n\;|\; z_i \ne z_j, i\ne j\}$. 
Denote by $G=G_{(z_1, \ldots, z_n)}$ 
 the graded (with respect to a grading operator $K$)   
algebraic completion of $W$.  
It is assumed that on $G$ there exists a non-degenerate bilinear pairing $(.,.)$.  
For $z=z_j$, $1 \le j \le n$, the element $T= - \partial_z$     
acts as the translation operator, and 
$K = - z \partial_z$ acts as a grading operator. 
 Let $G'=\coprod_{\l \in \Z} G_{\l}^*$ denotes 
the graded dual for $G=\bigoplus_{\lambda \in{\Z} }G_{(\lambda)}$ 
 with respect to $(.,.)$.
 Assume that 
 $G$ is equipped with a map    
$\gamma_g:  G_z  \to   G[[z, z^{-1}]]$,  as a formal series  
 $g \mapsto \gamma_g(z) \equiv \sum_{l\in \Z} g_l \; z^l$ for $z\in \C$. 
We assume that 
$G_{ (\l) }=\{w\in G | K w=\l w, \; \l=\wt(w)\}$.   
 Moreover we require that
$\dim G_{(\l)} < \infty$, i.e., 
 it is finite, and for fixed $\l$, $G_{(n+\l)}=0$,  for all 
small enough integers $n$. 
For $g$, $w\in G$,    
 $n \in \Z$,   
$g_n w=0$, $n \gg 0$, 
 $\gamma_{\bf 1}(z)={\rm Id}$. 
 For $g \in G$, $\gamma_g(z) w$ contains only finitely many negative 
power terms, that is, $\gamma_g(z)w\in G$.  
By $\one_G$ we denote the highest weight element in $G$.  
We normalize the pairing by the condition $(\overline{\one}_G, \one_G)=1$. 

We now define the space of meromorphic functions $\F(g_1, z_1; \ldots; g_n, z_n)$
 depending on $n$ 
 $G$-elements and $n$ formal complex parameters.  
For $\F$ we allow poles only at 
$z_i=z_j$, $i\ne j$.  
We define left action of the permutation group $S_n$ on $\F(g_1, z_1; \ldots; g_n, z_n)$ 
by
$\nc{\bfzq}{(z_1, \ldots, z_n)}
\sigma(\F)(g_1, z_1; \ldots; g_n, z_n)=\F\left(g_1, z_{\sigma(1)};  \ldots;  g_n, z_{\sigma(n)} \right)$.  
 For $g_1$, $g_2 \in G$, $w \in G$, we require for $\F$ that
 the functions   
$\F (\gamma_{g_1}(z_1) \; \gamma_{g_2}(z_2)w)$,     
 $\F( \gamma_{g_2}(z_2) \; \gamma_{g_1}(z_1) w)$,  and 
$\F( \gamma_{ \gamma_{g_1}(z_1-z_2)  g_2 } (z_2) w )$,    
are absolutely convergent
in the regions $|z_1|>|z_2|>0$, $|z_2|>|z_1|>0$,
$|z_2|>|z_1-z_2|>0$, 
respectively, to a common function 
in $z_1$ and $z_2$. 
The poles are only allowed at $z_1=0=z_2$, $z_1=z_2$.
 If $g$ is homogeneous then 
$g_m G_{(n)}\subset G_{(\wt u-m-1+n)}$.   
For a subgroup  ${\mathfrak G} \subset {\rm Aut} \; G$,   
 $\mathfrak G$ acts on $G$ as automorphisms if 
$g \; \gamma_h(z) \; g^{-1}=\gamma_{gh}(z)$,  
for all $g$, $h \in {\mathfrak G}$.    
The operator $K$ satisfies the derivation property
$\gamma_{K g}(z)=\frac{d}{dz}\gamma_g(z)$. 
Denote by $T_i.$ the operator acting on the $i$-th entry.
 We then define the action of partial derivatives on an element $\F(g_1, z_1;  \ldots; g_n, z_n)$  
\begin{eqnarray}
\label{cond1}
\partial_{z_i} \F(g_1, z_1; \ldots;  g_n, z_n)  &=& \F(T_i. \; (g_1, z_1; \ldots; g_n, z_n)),   
\nn
\sum\limits_{i \ge 1} \partial_{z_i}  \F(g_1, z_1; \ldots;  g_n, z_n)   
&=&  T \F(g_1, z_1; \ldots;  g_n, z_n).  
\end{eqnarray}
For $z \in \C$,  let 
\begin{eqnarray}
\label{ldir1}  
 e^{zT} \F (g_1, z_1; \ldots;  g_n, z_n)   
 = \F(g_1, z_1+z ; \ldots;  g_n, z_n+z).   
\end{eqnarray}
 Let us denote by 
${\rm Ins}_i(A)$ the operator of multiplication by $A \in \C$ at the $i$-th position. 
Then we assume that both sides of the expression 
\[
\F \left((g_1, \ldots, g_n) \right., {\rm Ins}_i(z_1, \ldots, z_n) \; \left.(z_1, \ldots, z_n) \right) =   
 \F\left( {\rm Ins}_i (e^{zT}) \right. \; \left. (g_1, z_1; \ldots;  g_n, z_n)\right),     
\]
are absolutely convergent
on the open disk $|z|<\min_{i\ne j}\{|z_i-z_j|\}$, and 
 equal as power series expansions in $z$. 
 For $z\in \C^{\times}$, 
$(zz_1, \ldots, zz_n) \in F_n\C$, we require for functions  
\begin{eqnarray}
\label{loconj}
z^K \F (g_1, z_1; \ldots;  g_n, z_n) =  
 \F \left(z^K g_1, zz_1; \ldots; z^K g_n, zz_n \right). 
\end{eqnarray}
For an arbitrary fixed $\theta \in G'$,      
 a map linear in $(g_1, \ldots, g_n)$ and $(z_1, \ldots, z_n)$, 
$\F: (z_1;  \ldots;  z_n) 
\mapsto   
     (\theta, f(g_1, z_1; \ldots; g_n, z_n ))$, 
delivers a particular example of 
  a meromorphic function in $(z_1, \ldots, z_n)$ 
which depends on $(g_1, \ldots, g_n)$. 

Now we recall further conditions on meromorphic functions associated to 
 a number of $\gamma_G$-series.  We call such functions predetermined combined with 
a number of $G$-series 
on a domain.   
By this we mean functions with specific 
 analytical behavior taking into account of Lie-algebra series.  
We denote by $\Po_k: G \to G_{(k)}$, $k \in \Z$,      
the projection of $G$ on $G_{(k)}$.
  Following \cite{Huang}, we formulate the following definitions and propositions. 
For $i=1, \dots, (l+k)n$, $k \ge 0$,  $ 1 \le l', l'' \le n$, 
 let $(l_1, \ldots, l_n)$ be a partition of $(l+ k)n     
=\sum\limits_{i \ge 1} l_i$, and $k_i= \sum_{j=1}^{i-1} l_j$.  
For $\zeta_i \in \C$,  
define 
$H_i  =\F ( \gamma_{  g_{k_1+ 1} }( z_{k_1+ 1}  - \zeta_i) \ldots 
\gamma_{  g_{k_i+l_i} }( z_{k_i+l_i}  - \zeta_i)\one_G)$, 
for $i=1, \dots, n$.
 It is assumed that  
the function   
 $\sum\limits_{ (r_1, \ldots, r_l) \in \Z^l}  
\F(  \Po_{r_1} H_1, \zeta_1; \ldots;  \Po_{r_l} H_l, \zeta_l )$,     
is absolutely convergent to an analytically extension 
in $(z_1, \ldots, z_n)_{l+k}$ 
in the domains 
$|z_{k_i+p} -\zeta_i| 
+ |z_{k_j+q}-\zeta_j|< |\zeta_i -\zeta_j|$,    
for $i$, $j=1, \dots, k$, $i\ne j$, and for $p=1, 
\dots$,  $l_i$, $q=1$, $\dots$, $l_j$.  
The convergence and analytic extension do not depend on complex parameters $\zeta_l$. 
On the diagonal of $(z_1, \ldots, z_n)_{l+k}$   
 the order of poles is bounded from above by described positive numbers 
$\beta(g_{l',i}, g_{l'', j})$.   
 For $(g_1, \ldots, g_{l+k}) \in G$,
$z_i\ne z_j$, $i\ne j$ 
$|z_i|>|z_s|>0$, for $i=1, \dots, k$, 
$s=k+1, \dots, l+k$ the sum 
$\sum_{q\in \Z}$  
$\F(  \gamma_{g_1}(x_1) \ldots  \gamma_{g_k}(x_k)  
 \Po_q ( \gamma_{g_{1+k}}(z_{1+k}) \ldots \gamma_{g_{l+k}}(x_{l+k}) )\one_G)$,     
is absolutely convergent and 
 analytically extendable to a
 function 
in variables $(z_1, \ldots, z_n)_{l+k}$. 
The order of pole that is allowed at 
$z_i=z_j$ is bounded from above by the numbers 
$\beta(g_{l', i}, g_{l'', j})$. 

Let  $S_l$ be the permutation group. 
For $l \in \N$ and $1\le s \le l-1$, let $J_{l; s}$ be the set of elements of
$S_l$ which preserve the order of the first $s$ numbers and the order of the last 
$l-s$ numbers, that is,
$J_{l, s}=\{\sigma\in S_l\;|\;\sigma(1)<\ldots <\sigma(s),\;
\sigma(s+1)<\ldots <\sigma(l)\}$. 
The elements of $J_{l; s}$ are called shuffles, and we use the notation 
 $J_{l; s}^{-1}=\{\sigma\;|\; \sigma\in J_{l; s}\}$.
 For any set of $G$-elements $g_i$, $g_j \in G$,   
and formal complex parameters $z_i$, $z_j$, $1\le i \le n$, 
$1\le j \le m$, 
 $n \ge 0$, $m \ge 0$,    
we denote by $\M^n_m(\mathfrak g, G)$,      
the space of all predetermined meromorphic functions combined with  
  $m$ $\gamma$-series  
 $(\F(z_1, \ldots,  z_n), \gamma_{g_j}(z'_j))$       
satisfying \eqref{ldir1}, \eqref{loconj}, 
the conditions of the previous paragraph,
and the symmetry property 
with respect to action of the symmetric group $S_n$: 
\begin{equation}
\label{shushu} 
\sum_{\sigma\in J_{n; s}^{-1}}(-1)^{|\sigma|}
\F\left(g_{\sigma(1)}, z_{\sigma(1)}; \ldots;  g_{\sigma(n)}, z_{\sigma(n)} \right) =0.
\end{equation}
For fixed $\mathfrak g$ and $G$ we will use the notation $\M^n_m$. 

One defines 
 $\F * ((f(g_1, z_1; \ldots;    
 g_{l_1}, z_{l_1}; \one_G, z);   
 \ldots;
  f(g_1^{'\ldots'}, z_1^{'\ldots'}; \ldots;    
 g_{l_n}^{'\ldots'}, z_{l_n}^{'\ldots'}; \one_G, z)):   
 G\to \C$:  
\begin{eqnarray*}
 && \F* ( f(g_1, z_1; \ldots;    
 g_{l_1}, z_{l_1}; \one_G, z);
 \ldots; 
 f(g_1^{'\ldots'}, z_1^{'\ldots'}; \ldots;    
 g_{l_n}^{'\ldots'}, z_{l_n}^{'\ldots'}; \one_G, z))
\nn
&& \qquad = \F ( f(g_{l_1}, z_{l_1}; \one_G, z);  
 \ldots;
f(g_1^{'\ldots'}, z_1^{'\ldots'};     
 g_{l_n}^{'\ldots'}, z_{l_n}^{'\ldots'}; \one_G, z)). 
\end{eqnarray*}
The action 
$\F(g_1, z_1; \ldots; g_m, z_m)*_0$ $f'(g_{m+1}$, $z_{m+1}$; $\ldots$; 
$g_{m+n}$, $z_{m+n})$ : $G$ $\to$ $G_{z_1, \dots, z_{m+n-1}}$,
 is given by 
\begin{eqnarray*}
&& \F(g_1, z_1; \ldots; g_m, z_m)*_0 f'(g_{m+1}, z_{m+1}; \ldots; g_{m+n}, z_{m+n}) 
\nn
&& \qquad = \F( f(g_1, z_1; \ldots; g_m, z_m);   
 f'(g_{m+1}, z_{m+1}; \ldots; g_{m+n}, z_{m+n})). 
\end{eqnarray*}
We introduce also  
$\F(g_1, z_1; \ldots; g_m, z_m)$ $*_{m+1}$ 
$f'(g_{n+1}, z_{n+1};  \ldots; g_{n+m}, z_{n+m})$ : $G\to G_{z_1, \ldots, z_{m+n-1}}$, 
 defined by 
\begin{eqnarray*}
 && \F(g_1, z_1; \ldots; g_m, z_m)   *_{m+1} (g_{n+1}, z_{n+1};  \ldots; g_{n+m}, z_{n+m})) 
\nn
 && \qquad = \F( f(g'_1, z'_1; \ldots; g'_n, z'_n) 
; g_{n+1}, z_{n+1};  \ldots; g_{n+m}, z_{n+m})).  
\end{eqnarray*}
The following result holds  
\begin{proposition}
\label{n-comm}
For $(g_1, \dots, g_n)\in G$, 
$\F(z_1, \ldots, z_n)$,  
is absolutely convergent in the region $|z_1|>\ldots >|z_n|>0$,   
to a function
in $(z_1, \dots, z_n)$ with the only possible poles at $z_i=z_j$, $i\ne j$,
and $z_i=0$.  
The $\F$ is invariant with respect to the action of  
$\sigma\in S_n$ on all entries $(z_1, \ldots, z_n)$. 
\end{proposition}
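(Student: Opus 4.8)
The plan is to argue by induction on $n$. For $n=1$ the claim is immediate: the axiom that $\gamma_{g_1}(z_1)\one_G$ contains only finitely many negative powers and lies in $G$ shows that $\F(\gamma_{g_1}(z_1)\one_G)$ converges for $|z_1|>0$ to a function whose only pole is at $z_1=0$, while the symmetry assertion is vacuous. For $n=2$ the statement is precisely the two-variable convergence-and-commutativity axiom: $\F(\gamma_{g_1}(z_1)\gamma_{g_2}(z_2)\one_G)$ and $\F(\gamma_{g_2}(z_2)\gamma_{g_1}(z_1)\one_G)$ converge in $|z_1|>|z_2|>0$ and $|z_2|>|z_1|>0$ to a common function whose poles occur only at $z_1=z_2$ and at $z_1=0=z_2$. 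I take as inductive hypothesis that the assertion holds for every product of fewer than $n$ series.

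For the inductive step I first establish absolute convergence on the cone $|z_1|>\cdots>|z_n|>0$. The essential tool is the projection-decomposition hypothesis: inserting $\sum_{q\in\Z}\Po_q$ between the first $k$ and the remaining factors expresses the $n$-point object as a $q$-indexed sum of products of a $k$-point datum with an $(n-k)$-point datum, each converging by the inductive hypothesis on the appropriate subcone. The grading restrictions, namely $\dim G_{(\lambda)}<\infty$ together with the vanishing $G_{(n+\lambda)}=0$ for all small enough $n$, bound each graded contribution, so the $q$-sum is dominated termwise by a convergent geometric series in the ratios $|z_{i+1}|/|z_i|$; this yields absolute convergence of the full multiple series on compact subsets of the cone. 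The scaling relation \eqref{loconj} lets me normalise, say $|z_1|=1$, and so reduce the estimate to a fixed bounded region.

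Next I produce the meromorphic continuation together with the pole bounds and the permutation symmetry in one stroke. For each ordering $|z_{\sigma(1)}|>\cdots>|z_{\sigma(n)}|>0$ the corresponding series converges on that open cone to a function $F_\sigma$, and by the two-variable commutativity axiom applied to adjacent factors $F_\sigma$ and $F_{\sigma'}$ agree on the nonempty overlap of their cones whenever $\sigma'$ differs from $\sigma$ by an adjacent transposition. Since the union of these cones together with their pairwise overlaps is connected, the $F_\sigma$ are restrictions of a single analytic function off the diagonals, which the pole hypotheses extend meromorphically to $F_n\C$ with pole order at $z_i=z_j$ bounded by $\beta(g_i,g_j)$ and at $z_i=0$ controlled by the single-series axiom. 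Because adjacent transpositions generate $S_n$ and the meromorphic extension is unique, the extension is invariant under the $S_n$-action on $(z_1,\ldots,z_n)$, which is also what makes the shuffle identities \eqref{shushu} consistent.

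I expect the principal obstacle to be the convergence step: proving that the multiple series converges \emph{absolutely} and uniformly on compact subsets, so that the rearrangement implicit in inserting $\sum_{q}\Po_q$ and the passage from an iterated to a genuine multiple sum are legitimate, and so that the limit does not depend on how the factors are bracketed. This demands combining the lower-boundedness and finite-dimensionality of the grading with the uniform $\beta$-pole bounds to exhibit an explicit dominating series; once absolute convergence is secured, the analytic continuation and the symmetry follow formally from the two-variable axioms as above.
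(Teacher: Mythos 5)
First, a point of comparison: the paper itself contains no proof of Proposition \ref{n-comm}; it is stated, following \cite{Huang}, as an imported property of the axiomatic framework. So your argument can only be judged on its own merits, and it has a genuine gap at its central step, the convergence of the $q$-indexed sum. You claim that $\dim G_{(\lambda)}<\infty$ together with the lower truncation of the grading forces $\sum_{q\in\Z}\F\bigl(\gamma_{g_1}(z_1)\cdots\gamma_{g_k}(z_k)\,\Po_q(\cdots)\one_G\bigr)$ to be dominated termwise by a geometric series in the ratios $|z_{i+1}|/|z_i|$. Finite-dimensionality of the graded pieces bounds the \emph{number} of terms contributing at each weight $q$, but it gives no control on the \emph{size} of those contributions as $q\to\infty$: for an abstract graded space with finite-dimensional pieces the relevant matrix elements can grow faster than any exponential, and the $q$-sum then diverges for every choice of the $z_i$. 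What rescues this in the vertex-algebra setting of \cite{Huang} is rationality of the two-point functions, derived from weak commutativity plus the grading; the $q$-sum is then recognized as the Laurent expansion of a rational function in an annulus, so convergence is a \emph{consequence} of rationality, not something extracted from dimension counts. In the present paper's axiomatics the issue is moot for a different reason: the absolute convergence and analytic extension of precisely these $\Po_q$-sums, with pole orders bounded by the numbers $\beta(\cdot,\cdot)$, is one of the defining assumptions of a predetermined function, so this step should be cited as a hypothesis rather than re-derived by an estimate that does not hold in the stated generality.

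Second, your continuation/symmetry step misstates the mechanism: the cones $|z_{\sigma(1)}|>\cdots>|z_{\sigma(n)}|>0$ attached to distinct orderings are pairwise \emph{disjoint}, so $F_\sigma$ and $F_{\sigma'}$ never "agree on the nonempty overlap of their cones." What the two-variable axiom actually provides is different and stronger: the series attached to $\gamma_{g_i}(z_i)\gamma_{g_{i+1}}(z_{i+1})$ and to $\gamma_{g_{i+1}}(z_{i+1})\gamma_{g_i}(z_i)$ converge, in their respective disjoint regions, to one \emph{common} meromorphic function, i.e. each is the analytic continuation of the other. Invoked this way, your induction on adjacent transpositions does yield the $S_n$-invariance via uniqueness of meromorphic extension; as written, the step is vacuous. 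The overall architecture (induction on $n$, base cases from the axioms, symmetry from adjacent transpositions) is sound and consistent with how such statements are established in the literature the paper relies on, but the two steps above are exactly where the content lies, and neither survives as stated.
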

The following useful proposition holds:  
\begin{proposition}
\label{comp-assoc}
Let $\F: G \to \C$  
is a predetermined meromorphic function combined 
 with $m$ series $\gamma_{g_j}(y_j)$, $1 \le j \le m$, $m \ge 0$.
 Then
\begin{enumerate}
\item For $p\le m$, $\F$ is a predetermined function combined with
 $p$ $\gamma_{g_p}(x_p)$-series, and for 
$p$, $q\in \Z_{+}$ such that $p+q\le m$ and 
$(l_1, \dots, l_n) \in \Z_+$ such that $l_1+\ldots +l_n=p+n$, 
$\F * $(  $f (g_1', z_1'; \ldots; g_{l_1}', z_{l_1}'; \one_G, z'); $  
$\ldots;$ 
 $f (g_1^{'\ldots'}, z_1^{'\ldots'}$;  
$\ldots$; $g_{l_n}^{'\ldots'}, z_{l_n}^{'\ldots'}; \one_G, z^{'\ldots'}))$ 
and $\F(g_1, z_1; \ldots; g_n, z_n) *_{p+1}\F$  
are combined with $q$ series. 
\item For $p$, $q\in \Z_+$ such that $p+q\le m$,  
$(l_1, \dots, l_n) \in \Z_+$ such that $l_1+\ldots +l_n=p+n$ and
$k_1, \dots, k_{p+n} \in \Z_+$ such that $k_1+\ldots +k_{p+n}=q+p+n$,
we have
\begin{eqnarray*}
&& (\F* f(g_1', z_1'; \ldots; g_{l_1}', z_{l_1}'; \one_G, z');
 \ldots;
 f(g_1^{'\ldots'}, z_1^{'\ldots'}; \ldots; g_{l_n}^{'\ldots'}, z_{l_n}^{'\ldots'}; \one_G, z^{'\ldots'}))  
\nn
&& \quad *   
 f(g_1^*, z_1^*; \ldots; g_{k_1}^*, z_{k_1}^*; \one_G, z^*);   
  \ldots  
 f(g_1^{*\ldots*}, z_1^{*\ldots*}; \ldots; g_{k_{p+n}}^{*\ldots*},   
 z_{k_{p+n}}^{*\ldots*}; \one_G, z^{*\ldots*}))  
\nn
&& \quad = \F *   
f(g_1^{*\ldots*}, z_1^{*\ldots*}; \ldots;  
 g_{k_1+\ldots +k_{l_1} }^{*\ldots*}, z_{k_1+\ldots +k_{l_1} }^{*\ldots*}; \one_G, z^{*\ldots*});  
\nn
&& \quad 
\ldots 
f(g_1^{*\ldots*}, z_1^{*\ldots*};  \ldots;  
g_{k_{l_1+\ldots +l_{n-1}+1}+\ldots +k_{p+n})}^{*\ldots*},  
z_{k_{l_{1}+\ldots +l_{n-1}+1}+\ldots +k_{p+n}) }^{*\ldots*}; \one_G, z^{*\ldots*}).
\end{eqnarray*}
\item For $p$, $q\in \Z_+$ such that $p+q\le m$ and
$(l_1, \dots, l_n) \in \Z_+$ such that $\sum_{i=1}^n l_i=p+n$, 
we have
\begin{eqnarray*}
 &&\F(g_1, z_1; \ldots; g_q, z_q)*_{q+1}
 (\F * ( f (g_1', z_1'; \ldots; g_{l_1}', z_{l_1}'; \one_G, z'); 
 \ldots; 
\nn
 && \qquad f (g_1^{'\ldots'}, z_1^{'\ldots'};  
\ldots; g_{l_n}^{'\ldots'}, z_{l_n}^{'\ldots'}; \one_G, z^{'\ldots'}) )  
\nn
&& \quad
=(\F(g_1, z_1; \ldots; g_q, z_q) *_{q+1} \F) 
*(f (g_1', z_1'; \ldots; g_{l_1}', z_{l_1}'; \one_G, z');   
 \ldots; f(g_1^{'\ldots'}, z_1^{'\ldots'};   
\ldots;
\nn
&& \qquad 
 g_{l_n}^{'\ldots'}, z_{l_n}^{'\ldots'}; \one_G, z^{'\ldots'}) ).  
\end{eqnarray*}
\item For $p$, $q\in \Z_+$ such that $p+q\le m$, we have 
\begin{eqnarray*}
 &&\F (g_1, z_1; \ldots; g_p, z_p) *_{p+1}  
 (\F(g_1',z_1'; \ldots; g_q', z_q')*_{q+1} \F') 
\nn
&& \qquad = \F(g_1, z_1; \ldots; g_p, z_p; g'_1, z'_1; 
\ldots;  g_q', z_q'; g, z)  *_{p+q+1} \F'. 
\end{eqnarray*}
\end{enumerate}
\end{proposition}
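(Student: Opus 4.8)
The four assertions are, respectively, the type-closedness and the operadic associativity axioms for the substitution operations $*$ and $*_{q+1}$, and the plan is to establish them in the spirit of \cite{Huang} by combining the absolute-convergence and analytic-extension axioms defining the $\M$-spaces with the uniqueness of analytic continuation. Throughout, the governing principle is that each operation is at bottom a substitution of one predetermined function into the argument slots of another, so that the formal identities are transparent and the real work lies in controlling convergence.

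For part (1) I would track the number of free $\gamma$-series. Since $\F$ is combined with $m$ series while each inserted factor $f(\cdots;\one_G,\cdot)$ carries its own arguments, the operation consumes $p$ of the series of $\F$ and leaves $q=m-p$ of them free; the constraints $p+q\le m$ and $l_1+\cdots+l_n=p+n$ encode precisely this accounting. It then remains to verify that the result again satisfies the defining properties of a function combined with $q$ series. Here I would invoke the projection-sum axiom—the absolute convergence of $\sum_{q\in\Z}\F(\gamma_{g_1}(x_1)\cdots\Po_q(\cdots)\one_G)$ together with its analytic extension—to guarantee that substituting the $\one_G$-normalized functions yields an absolutely convergent series admitting an analytic extension, while the diagonal pole-order bounds $\beta(g_{l',i},g_{l'',j})$ of the result are inherited additively from those of $\F$ and of the inserted factors.

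For parts (2)--(4) each equality compares the two ways of associating a triple substitution. In each case I would first expand both sides by the definitions of $*$, $*_0$ and $*_{q+1}$ as insertion operators, and observe that, as purely \emph{formal} substitutions, the two sides coincide once the arguments are relabelled by the stated index shifts (the reindexing through $k_1+\cdots+k_{l_i}$ in part (2), and through $p+q+1$ in part (4)). The genuine content is analytic: the two sides are \emph{a priori} defined and absolutely convergent only in their own nested regions, of the shapes $|z_i|>|z_s|>0$ and $|z_{k_i+p}-\zeta_i|+|z_{k_j+q}-\zeta_j|<|\zeta_i-\zeta_j|$ furnished by the axioms. I would then exhibit a nonempty common open polydisc on which every series occurring on either side converges absolutely; on that domain absolute convergence licenses the interchange and regrouping of the sums over the projection indices, which identifies the two sides there.

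Finally I would promote this local identity to a global one: by part (1) both sides extend to meromorphic functions on the configuration space with poles only along $z_i=z_j$ and at $z_i=0$, and two such functions agreeing on a nonempty open set agree as meromorphic functions. The main obstacle is exactly this reconciliation of domains, since the two associations are not convergent on a common region by construction; the crux is therefore to produce the overlap polydisc and to justify, using the absolute convergence supplied by the projection-sum axiom, the rearrangement of the doubly-indexed series. Once that interchange is validated, the associativity identities (2)--(4) follow at once.
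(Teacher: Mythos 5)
Note first that the paper itself contains no proof of Proposition \ref{comp-assoc}: it is one of the statements introduced with the phrase ``Following \cite{Huang}, we formulate the following definitions and propositions,'' and the text passes directly from its statement to Proposition \ref{correl-fn}. So your proposal cannot be checked against an in-paper argument, only against the implicit source (Huang's cohomology theory of grading-restricted vertex algebras), where such composability and associativity statements are derived from the convergence and analytic-extension axioms. In outline your plan is faithful to that strategy: formal coincidence of the two iterated substitutions after reindexing, plus analytic control of the series.

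As a proof, however, it has a genuine gap, and you name it yourself without closing it: the existence of a ``nonempty common open polydisc on which every series occurring on either side converges absolutely.'' For the iterated compositions in parts (2)--(4) the natural domains of absolute convergence are nested regions of different shapes --- $|z_i|>|z_s|>0$ for one association versus $|z_{k_i+p}-\zeta_i|+|z_{k_j+q}-\zeta_j|<|\zeta_i-\zeta_j|$ for the other --- and nothing guarantees that these intersect. Indeed, the whole reason the axioms are phrased as ``absolutely convergent \dots\ to a common function'' is that the several expansions typically converge in disjoint regions and are equal only as analytic continuations of one another; a proof that hinges on agreement on an open overlap would fail exactly in those cases. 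The correct route is to show that each side converges in its own region to a restriction of one and the same meromorphic function on the connected configuration space $F_n\C$ (using the projection-sum axiom and Proposition \ref{correl-fn}-type expansions), and then invoke uniqueness of meromorphic continuation --- not to produce an overlap polydisc, which may be empty. A smaller but real defect is in part (1): you assert the result is combined with $q=m-p$ series, whereas the statement only posits $p+q\le m$, and the count of how many $\gamma$-series are ``consumed'' when inserting $n$ factors with $l_1+\cdots+l_n=p+n$ arguments is precisely what must be verified against the definition of ``combined with''; in your plan it is asserted, not proved.
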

\begin{proposition}
\label{correl-fn}
For $k$, $(l_1, \dots, l_{n+1}) \in \Z_+$ and 
$(g_1^{(1)}, \dots, g_{l_1}^{(1)}, \dots, g_1^{(n+1)}, \dots$,
$g_{l_{n+1}}^{(n+1)}) \in G$, the series 
\begin{eqnarray*}
&& \sum_{(r_1, \ldots, r_n) \in \Z, \; r_{n+1}\in \Z}
 \F (  \Po_{r_1}
 (f(g_1^{(1)}, z_1^{(1)};   
\ldots;  
 g_{l_1}^{(1)}, z_{l_1}^{(1)}; \one_G, z^{(0)}_1) ); \ldots;     
\nn
&& 
\qquad  \qquad 
 \Po_{r_n} (f(g_1^{(n)}, z_1^{(n)};  
 \ldots;  
  g_{l_n}^{(n)}, z_{l_n}^{(n)}; \one_G, z^{(0)}_n))   
\nn
 &&
\qquad  \qquad 
\Po_{r_{n+1}} (f(g_1^{(n+1)}, z_1^{(n+1)};  
\ldots;  
 g_{l_{n+1}}^{(n+1)}, z_{l_{n+1}}^{(n+1)};  g, z^{(0)}_{n+1})),    
\end{eqnarray*}
converges absolutely to  
\begin{eqnarray*}
 \F(g_1^{(1)}, z_1^{(1)} +z^{(0)}_1; \ldots;   
 g_{l_1}^{(1)}, z_{l_1}^{(1)}+z^{(0)}_1;  \ldots; 
 g_1^{(n+1)}, z_1^{(n+1)}+z^{(0)}_{n+1};   
g_{l_{n+1}}^{(n+1)},  z_{l_{n+1}}^{(n+1)}+z^{(0)}_{n+1}),  
\end{eqnarray*} 
when $0<|z_p^{(i)}| + |z_q^{(j)}|< |z^{(0)}_i -z^{(0)}_j|$ 
for $i$, $j=1, \dots, n+1$, $i\ne j$, $p=1, \dots,  l_i$, $q=1, \dots, l_j$.
\end{proposition}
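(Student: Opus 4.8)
The plan is to recognize this statement as the multi-cluster version of the single-cluster convergence-and-analytic-extension hypothesis assumed earlier in the section, now carried out simultaneously over $n+1$ clusters based at the points $z^{(0)}_1, \ldots, z^{(0)}_{n+1}$, with the one new feature that the last cluster is seeded by a general element $g$ rather than by $\one_G$. First I would reinterpret each inner argument as a $G$-valued object: for $1 \le i \le n$ the function $f(g_1^{(i)}, z_1^{(i)}; \ldots; g_{l_i}^{(i)}, z_{l_i}^{(i)}; \one_G, z^{(0)}_i)$ is the product $\gamma_{g_1^{(i)}}(z_1^{(i)}) \cdots \gamma_{g_{l_i}^{(i)}}(z_{l_i}^{(i)}) \one_G$ of $\gamma$-series acting on the highest-weight vector, regarded as based at $z^{(0)}_i$; the last cluster is the same object with $\one_G$ replaced by $g$. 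Setting $\zeta_i = z^{(0)}_i$, these are exactly the elements $H_i$ appearing in the convergence hypothesis, and the projection $\Po_{r_i}$ extracts their weight-$r_i$ components.

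The heart of the argument is the slot-by-slot insertion of the formal completeness relation $\sum_{r \in \Z} \Po_r = \mathrm{Id}_G$. Placing $\sum_{r_i \in \Z} \Po_{r_i}$ into the $i$-th entry of $\F$ and summing over all $(r_1, \ldots, r_{n+1}) \in \Z^{n+1}$ reproduces, at the formal level, the evaluation of $\F$ on the un-projected elements. The nontrivial point is to upgrade this formal manipulation into a genuine analytic identity, that is, to justify interchanging the infinite $r$-summations with the evaluation of $\F$ and its analytic continuation. This is supplied precisely by the convergence-and-analytic-extension hypothesis assumed earlier: the series $\sum_{(r_1, \ldots, r_l) \in \Z^l} \F(\Po_{r_1} H_1, \zeta_1; \ldots)$ converges absolutely and extends analytically on the polydisc domain, with diagonal pole orders bounded by the numbers $\beta(g_{l',i}, g_{l'',j})$. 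Applying it with $\zeta_i = z^{(0)}_i$ yields absolute convergence of the displayed series in the region $0<|z_p^{(i)}|+|z_q^{(j)}|<|z^{(0)}_i-z^{(0)}_j|$, and Proposition \ref{n-comm} supplies the base analyticity of the resulting function.

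The one feature not literally covered by the single-cluster hypothesis is the general seed $g$ in the last cluster. I would treat this using the single-projection reconstruction assumed in the following paragraph, namely the absolute convergence of $\sum_{q \in \Z} \F(\ldots \Po_q(\ldots) \one_G)$ on $|z_i|>|z_s|>0$, together with the composition–associativity of Proposition \ref{comp-assoc}, which legitimizes treating $f(\ldots; g, z^{(0)}_{n+1})$ as a combined argument on the same footing as the $\one_G$-seeded clusters. Replacing the seed $\one_G$ by $g$ alters none of the convergence or pole-order estimates, since $\beta(g_{l',i}, g_{l'',j})$ depends only on the inserted elements and not on the seed.

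Finally, I would identify the limit with the claimed function. By the translation covariance \eqref{ldir1}, resumming $\sum_{r_i} \Po_{r_i}$ over the $i$-th cluster and reinserting it at $z^{(0)}_i$ restores the operator $e^{z^{(0)}_i T}$ acting on that cluster, which shifts every insertion point $z_a^{(i)}$ to $z_a^{(i)} + z^{(0)}_i$; performing this simultaneously for all $n+1$ clusters produces exactly $\F(g_1^{(1)}, z_1^{(1)}+z^{(0)}_1; \ldots; g_{l_{n+1}}^{(n+1)}, z_{l_{n+1}}^{(n+1)}+z^{(0)}_{n+1})$. I expect the main obstacle to be the justification of the termwise resummation, that is, passing from the formal relation $\sum_r \Po_r = \mathrm{Id}_G$ to an analytic identity valid uniformly on compact subsets of the polydisc; this is precisely where the absolute convergence furnished by the hypothesis, rather than a merely formal completeness, is indispensable.
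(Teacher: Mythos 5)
Before comparing: the paper itself contains no proof of Proposition \ref{correl-fn}. Like Propositions \ref{n-comm} and \ref{comp-assoc}, it is stated as material recalled from \cite{Huang} (``Following \cite{Huang}, we formulate the following definitions and propositions''); the only results actually proved in the paper are Lemma \ref{obvlem} and Proposition \ref{tolsto}. So your proposal cannot be matched against an in-paper argument; it has to stand on its own as a reconstruction of Huang's proof from the axioms of Section \ref{valuedef}.

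Judged on those terms, the convergence half of your argument is correct and is essentially a restatement of the hypothesis: under the substitution $\zeta_i = z_i^{(0)}$, $z_{k_i+p} = z_p^{(i)} + z_i^{(0)}$, the displayed series is exactly the series in the composability axiom, and the stated region $0<|z_p^{(i)}|+|z_q^{(j)}|<|z_i^{(0)}-z_j^{(0)}|$ is exactly the axiom's domain. The genuine gap is in the identification of the limit. The axiom only guarantees convergence to \emph{some} analytic extension independent of the $\zeta_i$; it does not say which function it is. Your mechanism for pinning it down --- formal completeness $\sum_{r}\Po_r = \mathrm{Id}$ plus translation covariance \eqref{ldir1} --- cannot work as stated, because the unprojected cluster $f(g_1^{(i)}, z_1^{(i)}; \ldots; \one_G, z_i^{(0)})$ lives in the algebraic completion of $G$, not in $G$, so ``$\F$ evaluated on the unprojected elements'' is undefined, and absolute convergence alone tells you the series has a sum, not what that sum is. What is actually required --- and what Huang's argument uses --- is the duality/associativity axiom of Section \ref{valuedef} (that $\F(\gamma_{g_1}(z_1)\gamma_{g_2}(z_2)w)$, $\F(\gamma_{g_2}(z_2)\gamma_{g_1}(z_1)w)$ and $\F(\gamma_{\gamma_{g_1}(z_1-z_2)g_2}(z_2)w)$ converge to a common function), applied inductively to peel one $\gamma$-series at a time off each cluster, combined with a creation-type identity $\gamma_g(z)\one_G = e^{zT}g$ (which the paper does not even list among its axioms) and the $\mathrm{Ins}_i(e^{zT})$ axiom to convert the seed at $z_i^{(0)}$ into the shift $z_p^{(i)}\mapsto z_p^{(i)}+z_i^{(0)}$. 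Your sketch never invokes the duality axiom, and without it the limit cannot be identified. The same criticism applies, more strongly, to the seed-$g$ cluster: you rightly flag that it is not covered by the $\one_G$-seeded hypothesis, but the proposed remedy (the $\sum_q$ axiom plus Proposition \ref{comp-assoc}) is left entirely schematic, and the claim that changing the seed ``alters none of the convergence or pole-order estimates'' is an assertion where a proof is needed.
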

Finally, we note  
\begin{lemma}
\label{obvlem}
\begin{eqnarray*}
&& \sum_{q\in \Z} \F (
g''_1, z_1; \ldots; 
g''_{m+m'}, z_{m+m'};   
\nn
&&
\qquad \qquad 
\Po_q (f(g''_{m+m'+1}, z_{m+m'+1}; \ldots; g''_{m+m'+k+n}, z_{m+m'+k+n})) 
%
\nn
&& =
\sum_{l \in \Z, \; g\in G_{(l)} }\epsilon^l
  \F (
g_{k+1}, x_{k+1}; \ldots;
g_{k+m}, x_{k+m}; 
\Po_q ( 
  f (g_1, x_1;  \ldots; g_k, x_k); g, \zeta_1) ) ) 
\nn
&&
 \qquad  \F(
 g'_{n+1}, y_{n+1}; \ldots;  
g'_{n+m'}, y_{n+m'};   
\Po_q(f 
(g'_1, y_1; \ldots; g'_n, y_n;  \overline{g}, \zeta_2 )) ).      
\end{eqnarray*}
\end{lemma}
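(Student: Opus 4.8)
The plan is to read the two sides as two equivalent evaluations of a single correlation function attached to the sphere obtained by sewing, and to reduce the identity to the resolution of identity at the sewing node together with the convergence machinery already established in Proposition \ref{correl-fn}. First I would dispose of the left-hand side. Since $G=\bigoplus_{\lambda\in\Z}G_{(\lambda)}$ and $\Po_q\colon G\to G_{(q)}$ is the grading projection, the family $\{\Po_q\}_{q\in\Z}$ is a resolution of the identity, $\sum_{q\in\Z}\Po_q=\mathrm{Id}_G$. Summing over $q$ therefore removes the truncation and collapses the left-hand side to the single predetermined function
\begin{equation*}
\F\bigl(g''_1,z_1;\ldots;g''_{m+m'},z_{m+m'};\, f(g''_{m+m'+1},z_{m+m'+1};\ldots;g''_{m+m'+k+n},z_{m+m'+k+n})\bigr),
\end{equation*}
carrying no intermediate grading. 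Its absolute convergence and meromorphy, with poles only along the diagonals, follow from Proposition \ref{n-comm} together with the defining convergence axioms of $\M^n_m$. The content of the lemma is then that this full function equals the factorized, sewn expression on the right.

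Next I would insert a complete set of intermediate states at the sewing node. Because the pairing $(\cdot,\cdot)$ is non-degenerate and each $G_{(l)}$ is finite-dimensional, for every $l\in\Z$ I can choose a basis $\{g\}$ of $G_{(l)}$ with its dual family $\{\overline{g}\}$, so that non-degeneracy yields $\sum_{g\in G_{(l)}}(\overline{g},w)\,g=\Po_l\,w$ and hence $\sum_{l\in\Z}\sum_{g\in G_{(l)}}(\overline{g},w)\,g=w$ for all $w\in G$. This is precisely the resolution that the right-hand side realizes geometrically: the inserted pair $g$ at $\zeta_1$ on the first sphere and $\overline{g}$ at $\zeta_2$ on the second are glued across the sewing circle, and the weight $\epsilon^{l}$ is the action of $\epsilon^{K}$ at the node, which on $G_{(l)}$ multiplies by $\epsilon^{l}$. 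I would convert this $\epsilon^{K}$-factor into a rescaling of the local coordinate of one component sphere using the scaling property \eqref{loconj} with parameter $\epsilon$, and use the translation property \eqref{ldir1} to place the insertions at the sewing points $\zeta_1,\zeta_2$; the normalization $(\overline{\one}_G,\one_G)=1$ then fixes the constant of the vacuum channel.

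Finally I would establish convergence and identify the limit. The factorization of the collapsed full function into the product of two $\F$-factors is the genuinely geometric step: cutting the sewn sphere along the sewing circle separates the marked points into the two groups, and the resolution of identity inserted at the cut turns the single correlation function into the summed product on the right, with $\epsilon^{l}$ recording the sewing parameter. The absolute convergence of the resulting double sum over $(l,g)$, and its analytic extension on the domain $0<|z_p^{(i)}|+|z_q^{(j)}|<|\zeta_i-\zeta_j|$, are controlled by the same mechanism as in Proposition \ref{correl-fn}: summing a graded family of projected insertions against the pairing reproduces a shifted full correlation function. Comparing this limit with the collapsed left-hand side, and appealing to uniqueness of analytic continuation, yields the asserted equality as meromorphic functions.

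The main obstacle is exactly this analytic step. Justifying the interchange of the summation over the intermediate states $g\in G_{(l)}$, and over the grade $l$, with the analytic continuation, and checking that the expansions valid in the two sewing regions agree, is delicate; controlling the double sum uniformly against the $\epsilon^{l}$-weights and matching the shifts by $\zeta_1,\zeta_2$ and the index ranges of the lemma to the hypotheses of Proposition \ref{correl-fn} is where the real work lies. Everything else is bookkeeping that the grading, the non-degeneracy of the pairing, and the transformation properties \eqref{ldir1}, \eqref{loconj} render routine.
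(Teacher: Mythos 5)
Your proposal is correct and is in essence the paper's own argument: both rest on the dual-basis resolution of identity at the sewing node (the sum over $l\in\Z$, $g\in G_{(l)}$ with weight $\epsilon^{l}$, using non-degeneracy of the pairing and finite-dimensionality of each $G_{(l)}$) together with the $S_{k+n+m+m'}$-invariance to relabel the entries; the paper merely runs the computation in the opposite direction, collapsing the factorized right-hand side into the single projected correlation function, while you start from the collapsed left-hand side and insert the intermediate states to factorize. Your explicit appeal to Proposition \ref{correl-fn} to control the convergence of the intermediate-state sum addresses head-on the analytic step that the paper handles only by formal term-by-term rewriting, so nothing essential is missing.
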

\begin{proof}
Consider
\begin{eqnarray}
&& \sum_{l \in \Z, \; g\in G_{(l)}} \epsilon^l 
  \F (
 g''_1, z_1; \ldots;
g''_{m+m'}, z_{m+m'}; 
\nn
&& 
\qquad 
 \Po_q (
f (g''_{m+m'+1}, z_{m+m'+1};  \ldots; g''_{m+m' +k}, z_{m+m'+k}); g,\zeta_1))
\nn
&&
   \F (g''_1, z_1; \ldots; 
g''_{m+m'}, z_{m+m'}; 
\nn
&&
 \qquad \Po_q (
 f( g''_{m+m'+k+1}, z_{m+m'+k+1}; \ldots; 
 g''_{m+m'+k+n}, z_{m+m'+k+n});  \overline{g},  \zeta_2)) 
\nn
 && = \sum_{l \in \Z, \;  q\in \Z,  \; g\in G_{(l)}} \epsilon^l    \F ( 
 g''_1, z_1; \ldots;  
 g''_{m+m'}, z_{m+m'};   
\nn
&& \qquad 
  \Po_q (
 f (g''_{m+m'+1}, z_{m+m'+1};  \ldots; g''_{m+m'+ k}, z_{m+m'+k})); g, \zeta_1)  
\nn
  && \F (
g''_1, z_1; \ldots; 
 g''_{m+m'}, z_{m+m'}; 
\nn
&&
  \quad \Po_q ( 
 f(g''_{m+m'+k+1}, z_{m+m'+k+1}; \ldots;  
 g''_{m+m'+k+n}, z_{m+m'+k+n})); \overline{g}, \zeta_2). 
\end{eqnarray}
 Thus, we can rewrite the last expression as 
\begin{eqnarray*}
&&
 \sum_{l \in \Z, \;  q\in \Z, \; g \in G_{(l)} } \epsilon^l \F(
g''_1, z_1; \ldots; 
g''_{m+m'}, z_{m+m'};  
\nn
&&
\qquad   
\; \Po_q(   f (g''_{m+m'+1}, z_{m+m'+1};  \ldots; g''_{m+m'+ k }, z_{m+m'+k}) ); g, \zeta_1 ) 
\nn
& & 
 \qquad   \F (
g''_1, z_1; \ldots;
g''_{m+m'}, z_{m+m'}; 
\nn
&&
\qquad  \qquad 
 \Po_q(  f
(g''_{m+m'+k+1}, z_{m+m'+k+1}; \ldots; g''_{m+m'+k+n}, z_{m+m'+k+n})); \overline{g}, \zeta_2)
\end{eqnarray*}
\begin{eqnarray*}
&&
= \sum_{l \in \Z, \;  q\in \Z, \; g\in G_{(l)} } \epsilon^l \F (
g''_1, z_1; \ldots; 
g''_{m+m'}, z_{m+m'};  
\nn
&&
 \Po_q( f(g''_{m+m'+1}, z_{m+m'+1};  \ldots; g''_{m+m'+ k }, z_{m+m'+k}) 
); g, \zeta_1) 
\nn
&&
 \qquad  \F (
g''_1, z_1; \ldots;
g''_{m+m'}, z_{m+m'}; 
\nn
& & 
 \qquad \qquad   \Po_q( f 
(g''_{m+m'+k+1}, z_{m+m'+k+1}; \ldots; g''_{m+m'+k+n}, z_{m+m'+k+n}; \overline{g}, -\zeta_2)) 
\end{eqnarray*}
\begin{eqnarray*}
&&
= \sum_{l \in \Z, \;  q\in \Z, \; \widetilde{g}' \in G'_{(l)} } \epsilon^l   \F(
g''_1, z_1; \ldots; 
g''_{m+m'}, z_{m+m'};  \widetilde{g}' ) 
\nn
&&
\qquad   
\sum_{l \in \Z, \; g\in G_{(l)} } \epsilon^l 
\; \Po_q (   f(g''_{m+m'+1}, z_{m+m'+1};  \ldots; g''_{m+m'+ k }, z_{m+m'+k}; g, -\zeta_1)) )  
\nn
& & 
\quad   \qquad 
\F (
g''_{1}, z_1; \ldots; 
g''_{m+m'}, z_{m+m'};  \widetilde{g}') 
\nn
&&
\qquad  \Po_q(  f
(g''_{m+m'+k+1}, z_{m+m'+k+1}; \ldots; g''_{m+m'+k+n}, z_{m+m'+k+n}; \overline{g}, -\zeta_2)))  
%
\end{eqnarray*}
\begin{eqnarray*}
&&
= \sum_{q\in \Z}  
  \F (
g''_1, z_1; \ldots; 
g''_{m+m'}, z_{m+m'};  
\nn
&&
\qquad   
 \Po_q(   f(g''_{m+m'+1}, z_{m+m'+1};  \ldots; g''_{m+m'+ k }, z_{m+m'+k}; 
\nn
& & 
\qquad   \qquad 
g''_{m+m'+k+1}, z_{m+m'+k+1}; \ldots; g''_{m+m'+k+n}, z_{m+m'+k+n})).    
\end{eqnarray*}
Now note that as an element of $\M^{k+n+m+m'}_.$, 
\begin{eqnarray*}
&&\F ( 
 g''_1, z_1; \ldots;  
 g''_{m+m'}, z_{m+m'};  
 \Po_q ( f(g''_{m+m'+1}, z_{m+m'+1};  \ldots; 
\nn
&& \qquad
g''_{m+m'+ k }, z_{m+m'+k}; 
 g''_{m+m'+k+1}, z_{m+m'+k+1}; \ldots; g''_{m+m'+k+n}, z_{m+m'+k+n})),
\end{eqnarray*}    
is invariant with respect to the action of $\sigma \in S_{k+n+m+m'}$. 
 Thus we are able to use this invariance to show that the last expression is reduced to 
\begin{eqnarray*}
 && \F ( 
 g''_{k+1}, z_{k+1}; \ldots; g''_{k+1+m}, z_{k+1+m};    
g''_{n+1}, z_{n+1}; \ldots; g''_{n+1+m'}, z_{n+1+m'};    
\nn
&&
 \qquad \Po_q( f(g''_1, z_1;  \ldots; g''_k, z_k;  
 g''_{k+1}, z_{k+1}; \ldots; g''_{k+n}, z_{k+n})))
\nn
&& = \F ( 
 g_{k+1}, x_{k+1};  \ldots; g_{k+1+m}, x_{k+1+m}; 
g'_{n+1}, y_{n+1};  \ldots; g'_{n+1+m'}, y_{n+1+m'};   
\nn 
 && \qquad \Po_q(f(g_1, x_1;  \ldots; g_k, x_k;    
g'_1, y_1; \ldots; g'_n, y_n)).  
\end{eqnarray*}
Similarly, since 
\begin{eqnarray*}
 &&\F (
g''_{1}, z_1; \ldots;
g''_{m+m'}, z_{m+m'};  
\nn
&&
\qquad 
 \Po_q (   
 f (g''_{m+m'+1}, z_{m+m'+1};  \ldots; g''_{m+m' +k}, z_{m+m'+k}; g, \zeta_1)) ), 
\end{eqnarray*}
\begin{eqnarray*}
   &&\F (
g''_1, z_1; \ldots;
g''_{m+m'}, z_{m+m'};  
\nn
&& \qquad 
\Po_q (f (g''_{m+m'+k+1}, z_{m+m'+k+1};  
\ldots; g''_{m+m'+k+n}, z_{m+m'+k+n}); \overline{g}, \zeta_2 ))),   
\end{eqnarray*}
correspond to elements of
 $\M^{m+m'+k}_.$ and 
$\M^{m+m'+k+n}_.$,    
we then obtain 
 $\F(
 g_{k+1}$, $x_{k+1}$; $\ldots$;  
 $g_{k+m}$, $x_{k+m}$;   
 $\Po_q ($  
 $f (g_1, x_1$;  $\ldots$; $g_k$, $x_k$; $g$, $\zeta_1$ $)))$   
and 
   $\F (g'_{n+1}, y_{n+1}$; $ \ldots$;  
 $g'_{n+m'}, y_{n+m'};  
 \Po_q (f(g'_1, y_1; \ldots; g'_n, y_n); \overline{g}, \zeta_2)))$,        
respectively.  
Thus, Lemma follows. 
\end{proof}
\section{Multiplication of $\M^n_m$-elements} 
\label{application}
In this Section we define the multiplication of the spaces 
 $\M^n_m$, $n\ge 0$, $m \ge 0$, 
and coboundary operators $\delta^n_m$ for chain-cochain double 
complexes $(\M^n_m, \delta^n_m)$, and study their properties. 
The matrix element for a number of Lie algebra-valued series represents usually  \cite{TUY} 
 a character associated to a Riemann sphere.
 We extrapolate this notion to the case of $\M^n_m$ spaces. 
 A space $\M^n_m$ 
can be associated with a Riemann sphere with $n$ 
marked points,  
while the multiplication of two such spaces is then associated
 with a sewing of such two spheres with a number of marked 
points 
and extra points with local coordinates identified with formal parameters of
 $\M^k_m$ and $\M^n_{m'}$.  
In order to supply an appropriate geometric construction for the multiplication,  
 we use the $\epsilon$-sewing procedure for two initial spheres
 to obtain a matrix element associated with the multiplication of $\M^n_m$ spaces.  

In our specific case of functions obtained by multiplying elements of 
$\M^._.$-spaces, we take Riemann spheres ${\mathcal S}^{(0)}_a$, $a=1$, $2$, as 
two initial auxiliary
 spaces.    
The resulting 
space is formed by 
the sphere ${\mathcal S}^{(0)}$ obtained by the procedure of sewing ${\mathcal S}^{(0)}_a$. 
The formal parameters $(x_1, \ldots, x_k)$ and $(y_1, \ldots, y_n)$ are identified with 
local coordinates of $k$ and $n$ points on two initial spheres 
${\mathcal S}^{(0)}_a$, $a=1$, $2$ correspondingly.  
In the $\epsilon$ sewing procedure, some $r$ points 
among 
$(p_1, \ldots, p_k)$ 
may coincide with 
points 
among $(p'_1, \ldots, p'_n)$ 
when we identify the annuli.     
This corresponds to the singular case of coincidence of $r$ formal parameters. 

Consider the sphere formed by sewing together two initial 
spheres in the sewing scheme referred to 
as the $\epsilon$-formalism in \cite{Y}. 
Let ${\mathcal S}_a^{(0)}$,   
$a=1$, $2$ 
be 
to initial spheres.  
Introduce a complex sewing
parameter $\epsilon$ where 
 $|\epsilon |\leq \rho_1 \rho_2$,
Consider $k$ distinct points on $p_i \in  {\mathcal S}_1^{(0)}$, $i=1, \ldots, k$, 
with local coordinates $(x_1, \ldots, x_k) \in F_k\C$,  
and distinct points $p_j \in  {\mathcal S}_2^{(0)}$, $j=1, \ldots, n$,
with local coordinates $(y_1, \ldots, y_n)\in F_n\C$,  
with
$\left \vert x_i \right\vert 
\geq |\epsilon |/\rho_2$, 
$\left\vert y_j\right\vert \geq |\epsilon |/\rho_1$. 
Choose a local coordinate $z_a\in \mathbb{C}$ 
on ${\mathcal S}^{(0)}_a$ in the
neighborhood of points $p_a \in{\mathcal S}^{(0)}_a$, $a=1$, $2$. 
Consider the closed disks  
$\left\vert \zeta_a \right\vert \leq \rho_a$, 
 and excise the disk 
$D_a= \{
\zeta_a, \; \left\vert \zeta_a \right\vert  
\leq |\epsilon |/\rho_{\overline{a}}\}\subset {\mathcal S}^{(0)}_a$, 
to form a punctured sphere  
$\widehat{\mathcal S}^{(0)}_a={\mathcal S}^{(0)}_a \backslash \{\zeta_a, \left\vert 
\zeta_a \right\vert \leq |\epsilon |/\rho_{\overline{a}}\}$.
We use the convention 
$\overline{1}=2$, $\overline{2}=1$.    
Define the annulus
$\mathcal{A}_a=\left\{\zeta_a,|\epsilon |/\rho_{\overline{a}}\leq \left\vert
\zeta_a \right\vert \leq \rho_a \right\}\subset \widehat{\mathcal S}^{(0)}_a$,  
and identify $\mathcal{A}_1$ and $\mathcal{A}_2$ as a single region 
$\mathcal{A}=\mathcal{A}_1 \simeq \mathcal{A}_2$ via the sewing relation 
$\zeta_1\zeta_2=\epsilon$.   
In this way we obtain a genus zero compact Riemann surface 
 ${\mathcal S}^{(0)}=\left\{ \widehat {\mathcal S}^{(0)}_1
\backslash \mathcal{A}_1 \right\}
\cup \left\{\widehat{\mathcal S}^{(0)}_2 \backslash 
\mathcal{A}_2 \right\}\cup \mathcal{A}$.  
 We introduce the multiplication of two double complex 
spaces 
with the image in another double complex space coherent with respect 
to the original coboundary operator \eqref{hatdelta}, and the symmetry property \eqref{shushu}.
For $G$-elements $(g_1, \ldots, g_n )$, $ (g'_1, \ldots,  g'_n )\in G$, 
 $\F (g_1, x_1$; $\ldots$ ; $g_k, x_k)$    
$\in \M^k_m$, and        
 $\F( g'_1, y_1; \ldots ; g'_n, y_n)$, $\in \M^n_{m'}$, 
are combined with $m$ and $m'$ $\gamma$-series correspondingly, 
 we introduce the multiplication for $\epsilon=\zeta_1 \zeta_2$,   
$\cdot_\epsilon:  \M^k_. \times \M^n_. \to  \M^{k+n}_.$,    
for $(x_1, \ldots, x_k; y_1, \ldots, y_n) \in F_{k+n}\C$.     
Let us assume that for any $g \in G$, $\F (g_1, x_1; \ldots ; g_k, x_k; g, \zeta_1)\in \M^{k+1}_m$, 
and $\F( g'_1, y_1; \ldots ; g'_n, y_n; \overline{g}, \zeta_2)$, $\in \M^{n+1}_{m'}$,
with $\zeta_1 \zeta_2=\epsilon$, and $\overline{g}$ is dual to $g$ with respect to $(.,.)$.  
The most natural choice of the multiplication supported by the geometrical 
consideration above has the following form 
\begin{eqnarray}
\label{Z2n_pt_eps1q1}
&& \F(g_1, x_1; \ldots; g_k, x_k; g'_1, y_1; \ldots; g'_n, y_n; \epsilon)   
\nn
 &&   =  \sum_{ l\in \mathbb{Z}, \; g \in G_{(l)}} \epsilon^l  
\F (g_1, x_1;  \ldots; g_k, x_k; g, \zeta_1 )\; 
\F (g'_1, y_1; \ldots; g'_n, y_n; \overline{g}, \zeta_2),      
\end{eqnarray}
parameterized by 
$\zeta_1$, $\zeta_2  \in \C$.  
The sum 
is taken over any $G_{(l)}$-basis,   
where $\overline{g}$ is the dual to $g$ with respect to the 
canonical pairing $( . , . )$,   
 with the dual space to $G$.     
By the standard reasoning \cite{Zhu},  
 \eqref{Z2n_pt_eps1q1} does not depend on the choice of a basis of $g \in G_{(l)}$, $l \in \Z$.   
 The definition of a multiplication
 is also supported by Proposition 
\eqref{correl-fn}.  
 In what follows, 
 we will see that, since $g \in G$ and $\overline{g} \in G'$  
are connected  by the sewing condition, $\zeta_1$ and $\zeta_2$ appear in a relation to each other. 
The form 
of the multiplication defined above is natural in terms of the theory of characters in conformal field theory 
\cite{TUY, FMS}.    
 
Let $t$ be the number of common series the mappings 
$\F(g_1, x_1$;  $\ldots$; $g_k, x_k) \in \M^k_m$ and  
$\F(g'_1, y_1; \ldots; g'_n, y_n) \in \M^n_{m'}$,  
are combined with $m$ and $m'$ series. 
 Similar to the case of common formal parameters, this case is 
separately treated with a decrease to $m+m'-t$ of number of combined series.   
Since we assume that $(x_1, \ldots, x_k; y_1, \ldots, y_n)\in F_{k+n}\C$, i.e., 
coincidences of $x_i$ and $y_j$ are  excluded by the definition of $F_{k+n}\C$.  
In what follows, we exclude this case from considerations. 
We define the action of $\partial_l=\partial_{z_l}={\partial}/{\partial_{z_l}}$, $1\le l \le k+n$, 
the differentiation of
$\F(g_1,  x_1;  \ldots;  g_k, x_k$; $g'_1, y_1; \ldots;  g'_n,  y_n; \epsilon)$ 
with respect to the $l$-th entry of 
 $(x_1,  \ldots,   x_k;   y_1, \ldots,   y_n)$  
 as 
follows 
\begin{eqnarray}
\label{Z2n_pt_eps1qdef}
 & &
\partial_l 
\F( g_1,  x_1;  \ldots;  g_k, x_k;   g'_1, y_1; \ldots;  g'_n,  y_n; \epsilon) 
\\
\nonumber
 & &  = \sum_{m\in \mathbb{Z}, g\in G_{(m)} } \epsilon^m  
    \partial^{\delta_{l,i}}_{x_i}  
   \F ( g_1,  x_1;  \ldots; g_k, x_k; g, \zeta_1)  \; 
  \partial^{\delta_{l,j}}_{y_j} 
\F (g'_1,  y_1; \ldots;  g'_n,  y_n;  \overline{g}, \zeta_2).      
\end{eqnarray}
We define the action of the operator $z^K$ on \eqref{Z2n_pt_eps1q1} as 
\begin{eqnarray}
\label{Z2n_pt_eps1qdefkg}
 & &
z^K.
\F( g_1,  x_1;  \ldots;  g_k, x_k;   g'_1, y_1; \ldots;  g'_n,  y_n; \epsilon) 
\\
\nonumber
 & &  = \sum_{m\in \mathbb{Z}, g\in G_{(m)} } \epsilon^m  
   \F ( z^K g_1,  z x_1;  \ldots; z^K g_k, z x_k; z^K g, z \zeta_1)  \; 
\\
&&
\nonumber 
 \qquad \qquad  \F (z^Kg'_1,  z y_1; \ldots;  z^Kg'_n,  z y_n;  z^K \overline{g}, z\zeta_2).      
\end{eqnarray}
We define the
action of an element $\sigma \in S_{k+n}$ on the multiplication of 
$\F(g_1, x_1;  \ldots$; $g_k, x_k)$ $\in \M^k_.$, and 
$\F(g'_1, y_1; \ldots; g'_n, y_n)$ $\in \M^n_.$, as 
\begin{eqnarray}
\label{Z2n_pt_epsss}
&&   \sigma(\F) (g_1, x_1;  \ldots; g_k, x_k; g'_1, y_1; \ldots; g'_n, y_n; \epsilon) 
\\
&&
\;=   \F (g_{\sigma(1)}, x_{\sigma(1)};  \ldots; g_{\sigma(k)}, x_{\sigma(k)}; 
g'_{\sigma(1)}, y_{\sigma(1)}; \ldots; g'_{\sigma(n)}, y_{\sigma(n)}; 
\epsilon) 
\nn 
\nonumber
& & \; =  
\sum_{g\in G_{(m)} }  
\F (g_{\sigma(1)}, x_{\sigma(1)};  \ldots; g_{\sigma(k)}, x_{\sigma(k)}; g, \zeta_1) \; 
\F (g'_{\sigma(1)}, y_{\sigma(1)}; \ldots; g'_{\sigma(n)}, y_{\sigma(n)}; 
\overline{g} , \zeta_2). 
\end{eqnarray}
Now we formulate the main result of this paper 
\begin{proposition}
\label{tolsto}
For $\F(g_1, x_1; \ldots; g_k, x_k) \in \M_m^k$ and 
$\F(g'_1, y_1; \ldots; g'_n, y_n)\in \M_{m'}^n$,  
the multiplication $\F\left(g_1, x_1; \ldots; g_k, x_k; g'_1, y_1; \ldots; g'_n, y_n; \epsilon\right)$ 
\eqref{Z2n_pt_epsss} 
belongs to the space $\M^{k+n}_{m+m'}$, i.e.,  
$\cdot_\epsilon : \M^k_m \times \M_{m'}^n \to  \M_{m+m'}^{k+n}$.   
\end{proposition}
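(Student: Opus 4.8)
The plan is to verify, one at a time, the defining properties of $\M^{k+n}_{m+m'}$ for the function \eqref{Z2n_pt_eps1q1}: absolute convergence of the defining series; meromorphy with poles only at $z_i=z_j$ ($i\ne j$) and $z_i=0$, of orders bounded by the prescribed numbers $\beta(g_{l',i},g_{l'',j})$; the translation and grading covariances \eqref{ldir1} and \eqref{loconj}; the combined-series condition with exactly $m+m'$ series; and the shuffle symmetry \eqref{shushu} under $S_{k+n}$. The strategy throughout is to reduce each claim, via the auxiliary results already established, to the corresponding property of the individual factors $\F(g_1,x_1;\ldots;g_k,x_k;g,\zeta_1)\in\M^{k+1}_m$ and $\F(g'_1,y_1;\ldots;g'_n,y_n;\overline g,\zeta_2)\in\M^{n+1}_{m'}$.

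First I would establish the absolute convergence of \eqref{Z2n_pt_eps1q1}. For each fixed grade $l$ the inner sum over a basis of $G_{(l)}$ is finite, since $\dim G_{(l)}<\infty$, so the only genuinely infinite summation is the one over $l\in\Z$ weighted by $\epsilon^l$. Using the $z^K$-covariance \eqref{loconj} to expose the homogeneity of each factor in the inserted element $g\in G_{(l)}$ and its dual $\overline g$, the weight $\epsilon^l=(\zeta_1\zeta_2)^l$ is exactly the geometric damping furnished by the sewing relation $\zeta_1\zeta_2=\epsilon$. The constraints $|\epsilon|\le\rho_1\rho_2$, $|x_i|\ge|\epsilon|/\rho_2$, $|y_j|\ge|\epsilon|/\rho_1$ place all parameters in the common annular domain, so the sum over $l$ falls under the convergence hypothesis of Proposition \ref{correl-fn}, yielding absolute convergence to a function analytic on $F_{k+n}\C$.

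Next I would reduce the two-factor expression to a single matrix element. Applying Lemma \ref{obvlem}, the product \eqref{Z2n_pt_eps1q1} is rewritten as a single matrix element of an element of $\M^{k+n}_.$ carrying a projection $\Po_q$; this identification is the crux of the argument, since it converts the sewing product into a genuine $(k+n)$-point function. Meromorphy and the pole structure at $z_i=z_j$ and $z_i=0$, bounded by $\beta(g_{l',i},g_{l'',j})$, are then inherited directly from that $(k+n)$-point function, the poles at $x_i=y_j$ being excluded by $(x_1,\ldots,x_k;y_1,\ldots,y_n)\in F_{k+n}\C$. The same identification yields the shuffle symmetry \eqref{shushu}: the single matrix element is manifestly invariant under the full $S_{k+n}$ acting on all $k+n$ entries — precisely the invariance exploited in the proof of Lemma \ref{obvlem} — whereas this symmetry is not visible in the asymmetric defining form \eqref{Z2n_pt_eps1q1}.

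Finally, the covariance properties follow termwise: differentiating as in \eqref{Z2n_pt_eps1qdef} and acting by $z^K$ as in \eqref{Z2n_pt_eps1qdefkg}, together with the covariances of each factor and the consistent scaling of $\zeta_1,\zeta_2$ under $\zeta_1\zeta_2=\epsilon$, gives \eqref{ldir1} and \eqref{loconj}. That the product is combined with exactly $m+m'$ series follows from Proposition \ref{comp-assoc}: the first factor is combined with $m$ series and the second with $m'$, and the composition structure of the $*$-operations adds these counts, the case of $t$ common series having been separated off with the reduction to $m+m'-t$. The main obstacle is the absolute convergence uniform over the sewing region: one must justify interchanging the summation over the grade $l$ with the analytic continuation, and verify that the rearrangement of Lemma \ref{obvlem} preserves \emph{absolute} (not merely formal) convergence. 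Here the finiteness $\dim G_{(l)}<\infty$, the lower-boundedness of the grading, and the radii constraints on $\epsilon$, $x_i$, $y_j$ are all essential.
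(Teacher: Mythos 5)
Your checklist of properties to verify (convergence, pole structure, the covariances \eqref{ldir1} and \eqref{loconj}, the $m+m'$ combined-series condition, and the shuffle identity \eqref{shushu}) coincides with the paper's, and your termwise treatment of the covariances is exactly the paper's argument. But the decisive analytic step is missing. The paper does \emph{not} obtain the absolute convergence of \eqref{Z2n_pt_eps1q1} from Proposition \ref{correl-fn}: it expands the two factors in $\zeta_1$ and $\zeta_2$ (the coefficient functions ${\mathcal R}_m(x_1,\ldots,x_k)$ and $\widetilde{\mathcal R}_m(y_1,\ldots,y_n)$ converge by Proposition \ref{n-comm}), rewrites the product as the $\epsilon$-series \eqref{pihva}, and applies Cauchy's inequality on the disks $|\zeta_a|\le R_a$ to get the bounds \eqref{Cauchy1}--\eqref{Cauchy2}, hence a geometric estimate $M_1M_2(R_1R_2)^{-m}$ and absolute convergence for $|\epsilon|\le r<\rho_1\rho_2$. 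Your route --- rewrite the dual-basis sum via Lemma \ref{obvlem} into a single matrix element and then invoke Proposition \ref{correl-fn} --- is circular at precisely this point: the rearrangement in Lemma \ref{obvlem} (reordering the double series over $l\in\Z$ and over bases of $G_{(l)}$ into $\sum_{q}\F(\ldots;\Po_q(\cdots))$) is legitimate only once absolute convergence is already known, and the hypotheses of Proposition \ref{correl-fn}, namely $0<|z_p^{(i)}|+|z_q^{(j)}|<|z^{(0)}_i-z^{(0)}_j|$, govern the insertion domains of the combined-series axiom, not the sewing sum weighted by $\epsilon^l$. You yourself flag this interchange as ``the main obstacle'' but do not resolve it; the Cauchy estimate is the missing resolution. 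In the paper, Lemma \ref{obvlem} and Proposition \ref{correl-fn} enter only later, to prove the $m+m'$ series compatibility via the bounds $|\mathcal C^{k+n}_{m+m'}(\F)|\le|\mathcal C^k_m(\F)|\,|\mathcal C^n_{m'}(\F)|$ and $|\mathcal D^{k+n}_{m+m'}(\F)|\le|\mathcal D^k_m(\F)|\,|\mathcal D^n_{m'}(\F)|$ (the paper argues this explicitly rather than by citing Proposition \ref{comp-assoc}, as you propose).

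Your argument for the shuffle identity also fails. You deduce \eqref{shushu} from the claim that the single $(k+n)$-point matrix element is invariant under all of $S_{k+n}$; but full symmetry does not force the signed shuffle sum to vanish. For instance, for a symmetric function of three variables and $s=1$, the set $J_{3;1}$ consists of the identity, one transposition, and one $3$-cycle, so the signs sum to $1-1+1=1$ and the alternating shuffle sum equals the function itself, not zero. The paper's proof of \eqref{shushu} is different and does not pass through symmetry: it factorizes $J^{-1}_{k+n;s}=J^{-1}_{k;s}\times J^{-1}_{n;s}$, expands in powers of $\epsilon$, and uses that the coefficients $\F_r(g_{\sigma(1)},x_{\sigma(1)};\ldots;\zeta_1)\in\M^k_m$ and $\F_{r'}(g'_{\sigma(1)},y_{\sigma(1)};\ldots;\zeta_2)\in\M^n_{m'}$ each satisfy \eqref{shushu} already, so each factor's alternating sum vanishes separately. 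A related slip: the poles at $x_i=y_j$ are not ``excluded'' by restricting to $F_{k+n}\C$; they are genuinely new poles acquired by the product, and membership in $\M^{k+n}_{m+m'}$ requires bounding their orders, which the paper does via the integers $\beta^{k+n}_{m+m'}(g''_{i''},g''_{j''})$.
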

We start from the proof of the convergence of the multiplication of two elements of double complexes 
 to an predetermined meromorphic function defining their multiplication. 
In order to prove convergence 
we have to use a geometrical interpretation \cite{H2, Y}. 
For an infinite-dimensional Lie algebra $\mathfrak g$, the definition of predetermined meromorphic 
functions combined with a number of $G$-series  
 with formal parameters taken as local coordinates on a Riemann sphere. 
Geometrically, each space $\M^n_m$ is associated to a Riemann sphere
 with a few marked points, 
and local coordinates 
 vanishing at these points 
\cite{H2}. 
Two extra points  
can be associated to centers of annuli used in order 
to sew two spheres to form another sphere. 
The multiplication \eqref{Z2n_pt_eps1q1} has then a transparent geometric interpretation
 and associated to a Riemann sphere formed as a result of sewing procedure.  
Let us identify (as in \cite{Zhu}) 
 two sets $(x_1, \ldots, x_k)$ and $(y_1, \ldots, y_n)$ of 
complex formal parameters,
with local
coordinates of two sets of points on the first and the second Riemann spheres correspondingly.   
Identify complex parameters $\zeta_1$, $\zeta_2$ of \eqref{Z2n_pt_eps1q1} 
with coordinates as in $D_a$ of 
the annuluses $\mathcal{A}_a$.  
After identification of annuluses $\mathcal A_a$ and $\mathcal A_{\overline{a}}$, 
$r$ coinciding coordinates may occur. This takes into account case of coinciding formal parameters.
The multiplication is defined 
by a sum of multiplications of matrix elements  associated to each of two spheres.  
Such sum is supposed to describe a predetermined meromorphic function defined 
on a sphere formed 
as a result of geometrical sewing \cite{Y} of two initial spheres. 
Since two initial spaces $\M^k_.$ and $\M^n_.$  
consists of predetermined meromorphic functions,  
it is then proved (Proposition \ref{tolsto}), that the multiplication results in 
a predetermined meromorphic function, i.e., 
an element of the space $\M^{k+n}_{m+m'}$ 
form by means of an absolute convergent matrix element on the resulting sphere. 
The complex sewing parameter, parameterizing the module space of sewing spheres, 
parameterizes also the multiplication of 
$\M^._.$-spaces.  
\begin{proof}
We would like to show that the multiplication \eqref{Z2n_pt_eps1q1} of 
elements of the spaces $\M^k_.$ and $\M^n_.$  
 corresponds to an absolutely converging in $\epsilon$ 
 meromorphic function 
with only possible poles at $x_i=x_j$, $y_{i'}=y_{j'}$, and    
 $x_i=y_{j'}$, $1 \le i, i' \le k$, $1 \le j, j' \le n$.  
In order to prove this we use the geometrical interpretation of the multiplication 
\eqref{Z2n_pt_eps1q1} in terms of Riemann spheres with marked points.  
We consider two sets of $G$-elements $(g_1, \ldots, g_k)$ and $(g'_1, \ldots, g'_k)$, 
and two sets of 
formal complex parameters 
$(x_1, \ldots, x_k)$, $(y_{1}, \ldots, y_n)$. Formal parameters are identified with local coordinates of
$k$ points on the Riemann sphere $\widehat{\mathcal S}^{(0)}_1$, and 
$n$ points on $\widehat{\mathcal S}^{(0)}_2$, 
 with excised annuli ${\mathcal A}_a$.  
Recall the sewing parameter condition $\zeta_1 \zeta_2 = \epsilon$ of the 
 sewing procedure. 
Recall from definition of the disks $D_a$ 
that in two sphere $\epsilon$-sewing formulation,  
the complex parameters $\zeta_a$, $a=1$, $2$ are coordinates inside identified annuluses ${\mathcal A}_a$, and 
$|\zeta_a|\le \rho_a$.  
Therefore, due to Proposition \ref{n-comm} 
the $m$-th coefficients of the expansions in $\zeta_1$ and $\zeta_2$,   
${\mathcal R}_m(x_1,  \ldots, x_k) = 
 {\rm coeff} \; \F (g_1, x_1;  \ldots; g_k, x_k; g, \zeta_1)$ and        
 $\widetilde{\mathcal R}_m(y_1, \ldots, y_n) =
 {\rm coeff} \; \F(g'_1, y_1; \ldots; g'_n, y_n; \overline{g}, \zeta_2)$,     
are absolutely convergent in powers of $\epsilon$ with some radia of convergence $R_a\le \rho_a$, with 
$|\zeta_a|\le R_a$. 
The dependence of the above coefficients on $\epsilon$ is expressed via $\zeta_a$, $a=1$, $2$.  
Let us rewrite the multiplication \eqref{Z2n_pt_eps1q1} as 
\begin{eqnarray}
\label{pihva}
&& 
 \F (g_1, x_1; \ldots; g_k, x_k; g'_1, y_1; \ldots; g'_n, y_n;  
 \epsilon) 
\nn
&&
=  \sum_{l\in \mathbb{Z}, \; g \in G_{(l)}, \; m \in \Z}  
 \epsilon^{l-m-1} \;  {\mathcal R}_m(x_1, \ldots, x_k) \;   
\widetilde{\mathcal R}_m(y_1, \ldots, y_n),        
\end{eqnarray}
  as a formal series in $\epsilon$    
for $|\zeta_a|\leq R_a$, where 
 and  
$|\epsilon |\leq r$ for $r<\rho_1\rho_2$.  
Then we 
 apply 
 Cauchy's inequality to 
 coefficients in $\zeta_1$, $\zeta_2$ above 
 to find 
\begin{equation}
 \left| {\mathcal R}_m
(x_1, \ldots, x_k) \right| \leq {M_1} {R_1^{-m}},  \label{Cauchy1} 
\end{equation}
with 
$M_1=\sup_{ \left| \zeta_1 \right| \leq R_1, \left|\epsilon \right| \leq r} 
\left| {\mathcal R}(x_1, \ldots, x_k ) \right|$.     
Similarly, 
\begin{equation}
 \left|  \widetilde{\mathcal R}_m (y_1, \ldots, y_n) \right| 
\leq {M_2} {R_2^{-m}},  \label{Cauchy2} 
\end{equation}
for
$M_2=\sup_{|\zeta_2|\leq R_2,|\epsilon |\leq r} 
\left|\widetilde{\mathcal R}(y_1, \ldots, y_n)\right|$.     
Using \eqref{Cauchy1} and \eqref{Cauchy2} we obtain for \eqref{pihva}  
\begin{eqnarray*}
 \left| 
 \left( 
 \F (g_1, x_1; \ldots; g_k, x_k; g'_1, y_1; \ldots; g'_n, y_n; \epsilon  
 \right)_l\right| 
  &\le&  
 \left|{\mathcal R}_m (x_1, \ldots, x_k)\right|\;  
   \left|\widetilde{\mathcal R}_m (y_1, \ldots, y_n)\right| 
\nn
   &\le&   
M_1 \; M_2 \;\left(R_1  R_2\right)^{-m}.   
\end{eqnarray*}
Thus, for
 $M=\min\left\{M_1, M_2\right\}$ and $R=\max\left\{R_1, R_2\right\}$,   
\begin{equation}
 \left|
\mathcal R_l(
  x_1;  \ldots, x_k;  
  y_1,  \ldots, y_n)
\right| \leq 
M R^{-l+m+1}.  \label{Cauchy} 
\end{equation}
Due to completeness of 
$C^{k+n}$ and density of the space of meromorphic functions, 
 we see that \eqref{Z2n_pt_eps1q1} is absolute convergent 
to a meromorphic function  
$\F(x_1, \ldots, x_k; y_1, \ldots, y_n; \epsilon)$  
 as a formal series in $\epsilon$   
for $|\zeta_a|\leq \rho_a$, and  
$|\epsilon |\leq r$ for $r<\rho_1\rho_2$, 
 with extra poles 
only at $x_i=y_j$, $1\le i \le k$, $1\le j \le n$. 

Now we prove that multiplication \eqref{Z2n_pt_eps1q1} satisfies the \eqref{ldir1}, \eqref{loconj}.  
 By using \eqref{ldir1} for $\F(g_1,  x_1;  \ldots; g_k, x_k)$
and $\F(g'_1$,  $y_1$; $\ldots$;  $g'_n$,  $y_n)$,  
 we consider 
\begin{eqnarray}
\label{Z2n_pt_eps1q00000}
 & &
\partial_l  \F( g_1,  x_1;  \ldots;  g_k, x_k;   g'_1, y_1; \ldots;  g'_n,  y_n; \epsilon) 
\nn
 & &  = \sum_{m\in \mathbb{Z}\; g\in G_{(m)}} \epsilon^m 
 \partial^{\delta_{l,i}}_{x_i} 
   \F ( g_1,  x_1;  \ldots; g_k, x_k; g, \zeta_1)\;     
 \partial^{\delta_{l,j}}_{y_j} 
\F (g'_1,  y_1; \ldots;  g'_n,  y_n;  \overline{g}, \zeta_2)  
\nn
& &  
=  \sum_{m\in \mathbb{Z}, \; g\in G_{(m)}} \epsilon^m
\F (g_1, x_1; \ldots; T^{\delta_{l,i}} g_i, x_i; \ldots; g_k, x_k; g, \zeta_1) 
\nn
& &
\qquad   \qquad 
\F(g'_1, y_1; \ldots; T^{\delta_{l,j}} g'_j, y_j;  
 \ldots; g'_n, y_n; \overline{g},  \zeta_2) 
\nn
& &  
=    
\F (g_1,  x_1;  \ldots;  T_l. .,. ;  \ldots;  g'_n,  y_n;
\epsilon),    
\end{eqnarray}
where $T_l.$ acts on the $l$-th entry of  
$(g_1,  \ldots;  g_k;   g'_1,  \ldots,   g'_n)$. 
Summing over $l$ we obtain
\begin{eqnarray*}
&&
\sum\limits_{l=1}^{k+n} \partial_l 
\F (g_1, x_1; \ldots; g_k, x_k; g'_1, y_1; \ldots; 
 g'_n, y_n; \epsilon) 
= 
\sum\limits_{l=1}^{k+n} 
\F (g_1, x_1; \ldots; T_l. ., .;   \ldots;  g'_n, y_n; \epsilon) 
\nn
&&
\qquad = T.\F(g_1, x_1;  \ldots ; g_k, x_k;   g'_1,  y_1; \ldots; g'_n,  y_n; \epsilon). 
\end{eqnarray*}
Due to \eqref{loconj} and \eqref{Z2n_pt_eps1qdefkg},   
consider 
\begin{eqnarray*}
&& \F (z^K. (g_1, x_1;  \ldots; g_k, x_k; g'_1, y_1; \ldots; g'_n, y_n; \epsilon))  
\nn
 & &  =\sum_{m \in \mathbb{Z}\; g\in G_{(m)} } \epsilon^m
 \F (z^K g_1, z\; x_1;  \ldots; z^K g_k, z\; x_k; z^K g, z \;\zeta_1)  
\nn
& &
\qquad   \qquad 
\F (z^K g'_1, z \; y_1; \ldots; z^K g'_n, z\; y_n; z^K \overline{g}, z \; \zeta_2)  
\nn
& &  
=  \sum_{m \in \mathbb{Z}, \; g \in G_{(m)} } \epsilon^m 
 z^K. 
\F (g_1, x_1;  \ldots; g_k, x_k; g, \zeta_1) 
\nn
& &
\qquad   \qquad 
 z^K.\F (g'_1, y_1; \ldots; g'_n, y_n; \overline{g}, \zeta_2)  
\nn
& &  
=  z^K.   
\F (g_1, x_1;  \ldots; g_k, x_k; g'_1, y_1; \ldots; g'_n, y_n; \epsilon)  
\end{eqnarray*}
Now we prove compatibility of the multiplication with extra Lie algebra series.  
We will show that $\F\left(
g_1, x_1; \ldots; g_k,  x_k; g'_1, y_1; \ldots; g'_n,  y_n
; \epsilon\right)$
 \eqref{Z2n_pt_epsss}  
is combined with $m+m'$ series.  
Recall that 
$\F(g_1, x_1; \ldots; g_k,  x_k)$
is combined with $m$ series, and 
 $\F(g'_1, y_1; \ldots; g'_n,  y_n)$ 
is combined with $m'$ series. 
For $\F(g_1, x_1; \ldots; g_k,  x_k)$ we have the following. 
Let $(l_1, \dots, l_k) \in \Z_+$ such that $\sum\limits_{i=1}^k l_i= k+m$,  and 
$(g_1, \dots, g_{k+m}) \in G$. Set  
$h_i
=
\F (g_{k_1}, x_{k_1} - \zeta_{i};  
 \ldots; 
g_{k_i}, x_{k_i}- \zeta_i 
 ; \one_G)$,     
where
$k_1=\sum_{j=1}^{i-1}l_j +1$, $ \ldots$, $k_i=\sum_{j=1}^{i-1}l_j +l_i$,  
for $i=1, \dots, k$. 
Then the series 
\begin{eqnarray}
\label{Inms}
\mathcal C^k_m(\F)=
\sum_{(r_1, \dots, r_k) \in \Z} 
\F(\Po_{r_1} h_1, \zeta_1; 
 \ldots; 
\Po_{r_k} h_k, \zeta_k),  
\end{eqnarray} 
is absolutely convergent  when   
\begin{eqnarray}
\label{granizy1}
|x_{l_1+\ldots +l_{i-1}+p}-\zeta_i| 
+ |x_{l_1+\ldots +l_{j-1}+q}-\zeta_j|< |\zeta_i -\zeta_j|, 
\end{eqnarray} 
for $i$, $j=1, \dots, k$, $i\ne j$ and for $p=1, 
\dots,  l_i$ and $q=1, \dots, l_j$. 
There exist positive integers $\beta^k_m(g_i, g_j)$, 
depending only on $g_i$ and $g_j$ for $i, j=1, \dots, k$, $i\ne j$, such that 
the sum is analytically extended to a
meromorphic function
in $(x_1, \dots, x_{k+m})$, 
 independent of $(\zeta_1, \dots, \zeta_k)$,  
with the only possible poles at 
$x_i=x_j$, of order less than or equal to 
$\beta^k_m(g_i, g_j)$, for $i$, $j=1, \dots, k$,  $i\ne j$.  
For $\F(g'_1, y_1; \ldots; g'_n,  y_n)$,  
 let $(l'_1, \dots, l'_n) \in \Z_+$ such that $\sum\limits_{i}^n l'_i= n+m'$,  
$(g'_1, \dots, g_{n+m'}') \in G$.  
Set  
%
$h'_{i'} 
=
f(g'_{k'_1}, y_{k'_1}- \zeta'_{i'};  
 \ldots; 
g'_{k'_{i'}}, y_{k'_{i'}}- \zeta'_{i'})$,      
where
 $k'_1=\sum_{j=1}^{i'-1}l'_j +1$,   $\ldots$, $k'_{i'} =\sum_{j=1}^{i'-1} l'_j +l'_{i'}$,   
for $i'=1, \dots, n$. 
  Then the series 
\begin{eqnarray}
\label{Jnms}
\mathcal C^n_{m'}(\F)=  
\sum_{(r'_1, \dots, r'_n) \in \Z}  
\F(\Po_{r'_1} h'_1, \zeta'_1; 
 \ldots; 
\Po_{r'_n} h'_n, \zeta'_n), 
\end{eqnarray} 
is absolutely convergent  when 
\begin{eqnarray}
\label{granizy2}
|y_{l'_1 + \ldots +l'_{i'-1}+p'}-\zeta'_{i'}| 
+ |y_{l'_1 +\ldots +l'_{j'-1}+q'}-\zeta'_{j'}|< |\zeta'_{i'}
-\zeta'_{j'}|, 
\end{eqnarray} 
for $i'$, $j'=1, \dots, n$, $i'\ne j'$ and for $p'=1, 
\dots,  l'_i$ and $q'=1, \dots, l'_j$. 
There exist positive integers $\beta^n_{m'}(g'_{i'}, g'_{j'})$,  
depending only on $g'_{i'}$ and $g'_{j'}$ for $i'$, $j'=1, \dots, n$, $i'\ne j'$, such that 
the sum is analytically extended to a meromorphic function
in $(y_1, \dots, y_{n+m'})$,   
 independent of $(\zeta'_1, \dots, \zeta'_n)$,   
with the only possible poles at 
$y_{i'}=y_{j'}$, of order less than or equal to 
$\beta^n_{m'}(g'_{i'}, g'_{j'})$, for $i'$, $j'=1, \dots, n$,  $i'\ne j'$.  

Now let us consider the conditions of compatibility for the multiplication 
\eqref{Z2n_pt_epsss} of 
$\F(g_1, x_1; \ldots; g_k,  x_k)$ 
 and 
 $\F(g'_1, y_1; \ldots; g'_n,  y_n)$ combined with a number of series. 
 We redefine the notations 
  $(g''_1$, $\ldots$, $g''_k; g''_{k+1}$, $\ldots$, $g''_{k+m}; g''_{k+m+1}$, $\dots$, $g''_{k+n+m+m'}$;
 $g_{n+1}$, $\ldots$, $g'_{n+m'})$
=$(g_1$, $\ldots$, $g_k; g_{k+1}$, $\ldots$, $g_{k+m}$; $g'_1, \dots, g'_n$; 
 $g'_{n+1}, \ldots, g'_{n+m'}$),  
$(z_1$, $\ldots$, $z_k; z_{k+1}$,  $\ldots$, $z_{k+n})$ =  
  $(x_1$, $\ldots$, $x_k$; $y_1$, $\ldots$, $y_n$),  
 of $G$-elements.  
Introduce $(l''_1, \dots, l''_{k+n}) \in \Z_+$,     
 such that $\sum_{j=1}^{k+n} l''_j = k+n+m+m'$.  
Define  
$h''_i
= f(g''_{k''_1}, z_{k''_1}- \zeta''_{i''};  
 \ldots; 
g''_{k''_{i''}}, z_{k''_{i''}}- \zeta''_{i''})$,      
where
 $k''_1= \sum_{j=1}^{i''-1} l''_j +1, \quad  \ldots, \quad  
k''_{i''}=\sum_{j=1}^{i''-1}l''_j+l''_{i''}$,    
for $i''=1, \dots, k+n$, 
and we take 
$(\zeta''_1, \ldots, \zeta''_{k+n})= (\zeta_1, \ldots, \zeta_k; \zeta'_1, \ldots, \zeta'_n)$. 
Then we consider 
\begin{eqnarray}
\label{Inmdvadva}
 && \mathcal C^{k+n}_{m+m'}(\F)=  
\sum_{r''_1, \dots, r''_{k+n}\in \Z}
\F(\Po_{r''_1 }h''_1, \zeta''_1; 
 \ldots; 
\Po_{r''_{k+n}} h''_{k+n}, \zeta''_{k+n}), 
\end{eqnarray} 
and prove it is absolutely convergent with some conditions. 
 The condition   
$|z_{l''_1+\ldots +l''_{i-1}+p''}$- $\zeta''_i|$  
+ $|z_{l''_1+\ldots +l''_{j-1}+q''}-\zeta''_j|< |\zeta''_i -\zeta''_j|$,   
of absolute convergence for \eqref{Inmdvadva} for $i''$, $j''=1, \dots, k+n$, $i'' \ne j''$ and for $p''=1,  
\dots,  l''_i$ and $q''=1, \dots, l''_j$, follows from the conditions \eqref{granizy1} and \eqref{granizy2}.  
 We obtain 
\begin{eqnarray*}
 && \left|\mathcal C^{k+n}_{m+m'}(\F)\right| 
=\left|\sum_{r''_1, \dots, r''_{k+n}\in \Z}
\F(\Po_{r''_1}h''_1,  \zeta''_1; 
 \ldots; 
\Po_{r''_{k+n}} h''_{k+n}, \zeta''_{k+n})  
\right| 
\nn
&&
=\left| \sum\limits_{l \in \Z, \;  g\in G_{(l)}, \; 
  (r_1, \dots, r_k)\in \Z}  
\F(\Po_{r_1}h_1,  \zeta_1; 
 \ldots; 
\Po_{r_k} h_k, \zeta_k; g, \zeta)   \right.
\nn
&&
\left. 
\qquad \qquad   \sum_{(r'_1, \dots, r'_n)\in \Z} 
\F(\Po_{r'_1}h'_1,  \zeta'_1; 
 \ldots; 
\Po_{r'_n} h'_n, \zeta'_n; \overline{g}, \widetilde{\zeta}) 
 \right|
 \le \left|\mathcal C^k_m(\F)\right| \; \left|\mathcal C^n_{m'}(\F)\right|.  
\end{eqnarray*} 
Thus,  \eqref{Inmdvadva}
is absolutely convergent. 
The maximal orders of possible poles of \eqref{Inmdvadva} are 
$\beta^k_m(g_i, g_j)$, $\beta^n_{m'}(g'_{i'}, g'_{j'})$
 at $x_i=x_j$, $y_{i'}=y_{j'}$. 
In \eqref{Z2n_pt_eps1q1} we obtain an expansion in powers of $x_i$ and $y_j$ we see that new poles 
at $x_i=y_j$ may occur. 
From the last expression we infer that  
there exist 
  positive integers $\beta^{k+n}_{m+m'}(g''_{i''}, g''_{j''})$, 
such that 
 $ \beta^k_m(g_i, g_j) \beta^n_{m'}(g'_{i'}, g'_{j'})
\le \beta^{k+n}_{m+m'}(g''_{i''}, g''_{j''})$, 
 for $i$, $j=1, \dots, k$, $i\ne j$,   $i'$, $j'=1, \dots, n$, $i'\ne j'$, 
depending only on $g''_{i''}$ and $g''_{j''}$ for $i''$,  $j''=1, \dots, k+n$, $i''\ne j''$
such that 
 the series \eqref{Inmdvadva} 
can be analytically extended to a
meromorphic function
in $(x_1, \dots, x_k; y_1, \ldots, y_n)$,    
 independent of $(\zeta''_1, \dots, \zeta''_{k+n})$,    
with extra 
 possible poles at  
 and $x_i=y_j$ of order less than or equal to 
$\beta^{k+n}_{m+m'}(g''_{i''}, g''_{j''})$, for $i''$, $j''=1, \dots, k+n$,  $i''\ne j''$.  

For $\F(g_1, x_1; \ldots; g_k, x_k)  \in  \M^k_m$, 
 the series  
$\mathcal D^k_m(\F)$ =  
$\sum_{q\in \Z}$  
 $\F (g_1$, $x_1$;  $\ldots$;   
 $g_m, x_m$;   
$\Po_q( f(g_{m+1}$, $x_{m+1}$;  $\ldots$;  
$g_{m+k}$, $x_{m+k}$
$)$,  
is absolutely convergent when 
 $x_i \ne x_j$, $i\ne j$,  
$|x_i|>|x_{k'}|>0$, 
 for $i=1, \dots, m$, and $k'=m+1, \dots, k+m$.  
The sum can be analytically extended to a
meromorphic function 
in $(x_1, \dots,  x_{k+m})$ with the only possible poles at 
$x_i=x_j$, of orders less than or equal to 
$\beta^k_m(g_i, g_j)$, for $i, j=1, \dots, k$, $i\ne j$.  
For 
 $\F(g'_1, y_1; \ldots; g'_n, y_n)  \in   \M_{m'}^n$,
 the series 
 $\mathcal D^n_{m'}(\F)$=   
$\sum_{q\in \Z}  \F (g'_1$, $y_1$; $\ldots$;  
 $g'_{m'}$, $y_{m'}$;   
$\Po_q( f($ $g'_{m'+1}$,  $y_{m'+1}$; $\ldots$; $g'_{m'+n}, y_{m'+n})$  
$))$, 
is absolutely convergent when 
$y_{i'}\ne y_{j'}$, $i'\ne j'$, 
$|y_{i'}|>|y_{k''}|>0$,  
 for $i'=1, \dots, m'$, and $k''=m'+1, \dots, n +m'$, and the sum can be analytically extended to a
meromorphic function 
in $(y_1, \ldots, y_{n+m'})$ with the only possible poles at 
$y_{i'}=y_{j'}$, of orders less than or equal to 
$\beta^n_{m'}(g'_{i'}, g'_{j'})$, for $i'$,  $j'=1, \dots, n$, $i'\ne j'$.   
 For the multiplication \eqref{Z2n_pt_epsss}, 
  $(g''_{1}, \dots, g''_{k+n +m+m'})\in G$,  
and 
$(z_1, \ldots $, $ z_{k+n+m+m'} )\in \C$, and  
we find  
positive integers  
$\beta^{k+n}_{m+m'}(g'_i, g'_j)$,  
 depending only on $v'_i$ and 
$v''_j$, for $i''$, $j''=1, \dots, k+n$, $i''\ne j''$.    
Under conditions
$z_{i''}\ne z_{j''}$, $i''\ne j''$, 
$|z_{i''}|>|z_{k'''}|>0$,  
 for $i''=1, \dots, m+m'$, and $k'''=m+m'+1, \dots, m+m'+ k+n$, 
let us introduce 
 $\mathcal D^{k+n}_{m+m'}(\F) = \sum_{q\in \Z}$ 
 $\F (
g''_1, z_1; \ldots; 
g''_{m+m'}, z_{m+m'};  
 \Po_q ( f( g''_{m+m'+1}, z_{m+m'+1}; \ldots; g''_{m+m'+k+n}, z_{m+m'+k+n})) 
; \epsilon)$.    
Using Lemma \ref{obvlem} we then obtain 
\begin{eqnarray*}
&&
\left|\mathcal D^{k+n}_{m+m'}(\F) \right| 
= \left| \sum_{q\in \Z} \F (
g''_1, z_1; \ldots; 
g''_{m+m'}, z_{m+m'};  \right. 
\nn
&&
\left. 
\qquad \Po_q ( f( g''_{m+m'+1}, z_{m+m'+1}; \ldots; g''_{m+m'+k+n}, z_{m+m'+k+n}))
; \epsilon ) \right|
\nn
&&
= \left| \sum_{q\in \Z, \; g\in G} 
   \F (
g_{k+1}, x_{k+1}; \ldots;
g_{k+m}, x_{k+m}; 
\Po_q(  
 f(g_1, x_1;  \ldots; g_k, x_k); g, \zeta_1)) 
\right. 
\nn
&&
 \left. \qquad 
  \F(
g'_{n+1}, y_{n+1}; \ldots; 
g'_{n+m'}, y_{n+m'};  
\Po_q( f (g'_1, y_1; \ldots; g'_n, y_n); \overline{g}, \zeta_2 ) ) ) \right|
\nn
&&
\quad  \le \left|
\mathcal D^k_m(\F) \right| \left|  \mathcal D^n_{m'}(\F)\right|, 
\end{eqnarray*}
where we have used 
the invariance of \eqref{Z2n_pt_epsss} with respect to 
$\sigma \in S_{m+m'+k+n}$. 
In the last expression, 
 according to Proposition \ref{correl-fn} 
$\mathcal D^k_m(\F)$ and $\mathcal D^n_{m'}(\F)$ 
are absolute convergent. 
Thus, $\mathcal D^{k+n}_{m+m'}(\F)$ 
is absolutely convergent, and 
  \eqref{Inmdvadva} is analytically extendable to a meromorphic function  
in $(z_1, \dots, z_{k+n+m+m'})$ with the only possible poles at 
$x_i=x_j$, $y_{i'}=y_{j'}$, and 
at $x_i=y_{j'}$, i.e., the only possible poles at 
$z_{i''}=z_{j''}$, of orders less than or equal to 
$\beta^{k+n}_{m+m'}(v''_{i''}, v''_{j''})$, 
for $i''$, $j''=1, \dots, k'''$, $i''\ne j''$.  

 Finally, for the action of $\sigma \in S_{k+n}$ on the product we have 
\begin{eqnarray*} 
&&
 \sum_{\sigma\in J_{k+n; s}^{-1}}(-1)^{|\sigma|} 
\F(g_{\sigma(1)}, x_{\sigma(1)}; \ldots; g_{\sigma(k)}, x_{\sigma(k)};  
g'_{\sigma(1)},  y_{\sigma(1)};  \ldots; g'_{\sigma(n)},  y_{\sigma(n)}; \epsilon) 
\nn
&&
=
\sum_{\sigma\in J_{k+n; s}^{-1}, \; g\in G_{(l)}}(-1)^{|\sigma|} \epsilon^l 
 \F(g_{\sigma(1)}, x_{\sigma(1)}; \ldots; g_{\sigma(k)},  x_{\sigma(k)}; g, \zeta_1)   
\nn
&&
 \qquad \qquad \qquad  
 \; \F(g'_{\sigma(1)}, y_{\sigma(1)}; \ldots; g'_{\sigma(n)},  y_{\sigma(n)}; \overline{g},  \zeta_2)  
\nn
&&
=\sum\limits_{\; r \in \Z, \; 
 \sigma\in J_{k; s}^{-1}   }\epsilon^r \; (-1)^{|\sigma|} 
\F_{r}(g_{\sigma(1)}, x_{\sigma(1)}; \ldots; g_{\sigma(k)},  x_{\sigma(k)}; \zeta_1)   
\nn
&&
\sum_{r'\in \Z, \;\sigma\in J_{n; s}^{-1}}\epsilon^{r'}  \; (-1)^{|\sigma|} 
\F_{r'}(g'_{\sigma(1)}, y_{\sigma(1)}; \ldots; g'_{\sigma(n)},  y_{\sigma(n)}; \zeta_2) 
 =0,  
\end{eqnarray*}
due to $J^{-1}_{k+n; s}= J^{-1}_{k;s} \times J^{-1} _{n;s}$,  definition \eqref{Z2n_pt_epsss}, 
and 
 $\F_r(g_{\sigma(1)}$, $x_{\sigma(1)}$; $\ldots$; $g_{\sigma(k)}$,  $x_{\sigma(k)}$; $\zeta_1)$  $\in \M^k_m$,     
$\F_{r'}(g'_{\sigma(1)}$, $y_{\sigma(1)}$; $\ldots$; $g'_{\sigma(n)}$,  $y_{\sigma(n)}$; $\zeta_2)$ $ \in \M^n_{m'}$, 
 and, therefore, \eqref{shushu} is satisfied. 
This finishes the proof of the proposition. 
\end{proof}
Let us now recall \cite{Huang} the definition of 
 the coboundary operator 
for the spaces $\M^n_m$, 
\begin{eqnarray}
\label{hatdelta}
  \delta^n_m \F (g_1, z_1; \ldots; g_n, z_n)  
&=& 
\sum_{i=1}^n(-1)^i \; \F\left( \ldots; \gamma_{g_i}( z_i - z_{i+1}) \;  g_{i+1}; \ldots \right)   
\nn
&+& 
  \F (\gamma_{g_1} \left(z_1 \right);  g_2, z_2; \ldots; g_n, z_n)  
\nn
 &+&
 (-1)^{n+1} \F(\gamma_{g_{n+1}}(z_{n+1}); g_1, z_1; \ldots; g_n, z_n). 
\end{eqnarray}
The following lemma takes place:  
\begin{lemma}
\label{cochainprop}
The operator \eqref{hatdelta} obeis 
$\delta^n_m: \M_m^n 
\to \M_{m-1}^{n+1}$,    
$\delta^{n+1}_{m-1} \circ \delta^n_m=0$,  
$0\longrightarrow \M_m^0
\stackrel{  \delta^0_m  }{\longrightarrow} \M_{m-1}^1
\stackrel{  \delta^1_{m-1}}{\longrightarrow}\ldots 
\stackrel{  \delta^{m-1}_1}{\longrightarrow}
 \M_0^m \longrightarrow 0$,  
i.e., provides the double chain-cochain complex $\left(\M_m^n, \delta^n_m \right)$. \hfill $\qed$
\end{lemma}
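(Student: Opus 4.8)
The plan is to establish the three assertions of the lemma in turn: first that $\delta^n_m$ is a well-defined map $\M^n_m \to \M^{n+1}_{m-1}$, then the nilpotency $\delta^{n+1}_{m-1}\circ\delta^n_m = 0$, and finally to assemble these into the displayed complex.

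\textbf{Well-definedness.} Given $\F \in \M^n_m$, I would verify that each property defining $\M^{n+1}_{m-1}$ is inherited by $\delta^n_m\F$. Each summand in \eqref{hatdelta} arises from $\F$ either by replacing an adjacent pair $(g_i, z_i; g_{i+1}, z_{i+1})$ with the single entry $\gamma_{g_i}(z_i - z_{i+1})g_{i+1}$, or by prepending an insertion $\gamma_{g_1}(z_1)$, respectively $\gamma_{g_{n+1}}(z_{n+1})$. Absolute convergence, meromorphicity, and the bound on pole orders of each summand follow from the composability/associativity statement of Proposition \ref{comp-assoc} together with the defining convergence of $\F(\gamma_{g_1}(z_1)\,\gamma_{g_2}(z_2)w)$ in the prescribed regions. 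The translation and derivative relations \eqref{cond1}, \eqref{ldir1} are preserved because $T=-\partial_z$ passes through the construction and the derivation property $\gamma_{Kg}(z)=\tfrac{d}{dz}\gamma_g(z)$ converts $K$-actions into parameter derivatives, while the scaling relation \eqref{loconj} is preserved by applying $z^K$ through each $\gamma$-insertion. The key index bookkeeping is that the second and third terms turn one of the $m$ attached $\gamma$-series into an explicit insertion carrying its own formal parameter, so the number of free parameters rises from $n$ to $n+1$ while the number of combined series drops from $m$ to $m-1$, exactly the claimed shift. The shuffle relation \eqref{shushu} on $n+1$ parameters I would check against the alternating structure of \eqref{hatdelta}, using that $J_{n+1;s}^{-1}$ interacts compatibly with the face-type sum and the $S_n$-antisymmetry of $\F$ itself.

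\textbf{Nilpotency.} This is the heart of the lemma and the step I expect to be most delicate. I would compute $\delta^{n+1}_{m-1}(\delta^n_m\F)$ by substituting \eqref{hatdelta} into itself and expanding the double sum. The terms organize into pairs governed by the standard simplicial (Hochschild-type) identities: a term performing contractions at positions $i$ and $j$ with $i<j$ is matched, with opposite overall sign coming from the factors $(-1)^i$ and $(-1)^{j}$, by the term performing the same two contractions in the opposite order. The adjacent double-contraction terms, where two neighbouring $\gamma$-insertions collide, cancel precisely by the associativity/duality axiom---the common-limit identity relating $\F(\gamma_{g_1}(z_1)\,\gamma_{g_2}(z_2)w)$ and $\F(\gamma_{\gamma_{g_1}(z_1-z_2)g_2}(z_2)w)$---which guarantees that the two bracketings of a triple contraction agree analytically. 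The boundary terms built from $\gamma_{g_1}(z_1)$ and $\gamma_{g_{n+1}}(z_{n+1})$ cancel against one another after reindexing, using the same duality to commute a prepended insertion past an adjacent contraction. The main obstacle is managing the signs and the regions of convergence simultaneously: the cancellations are only meaningful once all summands are continued to a common domain. I would therefore first fix an ordering region such as $|z_1|>\cdots>|z_{n+2}|>0$, in which every term converges by Proposition \ref{comp-assoc}, carry out the cancellation there, and then invoke uniqueness of analytic continuation (Proposition \ref{n-comm}) to conclude that $\delta^{n+1}_{m-1}\delta^n_m\F=0$ identically.

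\textbf{The complex.} With the index shift $\M^n_m \to \M^{n+1}_{m-1}$ and $\delta^2=0$ established, the sequence follows at once: starting from $\M^0_m$ and applying $\delta$ repeatedly raises the upper index and lowers the lower index in lockstep, terminating at $\M^m_0$ once no combined series remain. This yields exactly the displayed cochain complex $0\to\M^0_m\to\cdots\to\M^m_0\to 0$, and hence the double chain-cochain complex $(\M^n_m,\delta^n_m)$, completing the proof.
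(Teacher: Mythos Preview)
The paper does not actually prove this lemma: the statement ends with a bare $\qed$ box and the coboundary operator is explicitly ``recalled'' from \cite{Huang}, so the result is imported rather than established here. There is therefore no paper proof against which to compare your approach.

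That said, your sketch follows the standard route one would take for such a statement, and the overall architecture is sound. The nilpotency argument via pairwise cancellation of face maps, with the associativity/common-limit axiom handling adjacent double contractions, is exactly the Hochschild-type mechanism underlying Huang's cohomology theory. Two places where your sketch would need tightening before it counts as a proof: (i) the verification of the shuffle relation \eqref{shushu} for $\delta^n_m\F$ is only asserted, not carried out---this is genuinely combinatorial and does not follow automatically from the alternating signs in \eqref{hatdelta}; and (ii) the index bookkeeping for the middle summands $\F(\ldots;\gamma_{g_i}(z_i-z_{i+1})g_{i+1};\ldots)$ also shifts indices (these terms reduce the number of explicit parameters by one while consuming a $\gamma$-series), so you should check carefully that all three types of summand land in the same target space $\M^{n+1}_{m-1}$ under the paper's conventions.
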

Then one has 
\begin{corollary}
The multiplication \eqref{Z2n_pt_epsss} extends the chain-cochain 
complex $(\M^n_m, \delta^n_m)$
 to all multiplications $\M^k_m \times \M^n_{m'}$,   
$k$, $n \ge0$, $m$, $m' \ge0$. 
\hfill $\qed$
\end{corollary}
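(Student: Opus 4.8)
The plan is to verify, one at a time, each of the conditions that define membership in $\M^{k+n}_{m+m'}$: absolute convergence of the series \eqref{Z2n_pt_epsss} to a meromorphic function whose only poles sit at $x_i=x_j$, $y_{i'}=y_{j'}$ and $x_i=y_j$; the covariance relations \eqref{ldir1} and \eqref{loconj}; compatibility with $m+m'$ combined $\gamma$-series; and the shuffle antisymmetry \eqref{shushu} under $S_{k+n}$. Throughout I would exploit the geometric sewing picture: each factor $\F(g_1,x_1;\ldots;g_k,x_k;g,\zeta_1)$ and $\F(g'_1,y_1;\ldots;g'_n,y_n;\overline{g},\zeta_2)$ is a matrix element attached to one of the auxiliary spheres $\mathcal S^{(0)}_a$, and the $\epsilon$-sum in \eqref{Z2n_pt_eps1q1} assembles the matrix element on the sewn sphere $\mathcal S^{(0)}$, with $\epsilon=\zeta_1\zeta_2$ playing the role of the sewing parameter.

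For convergence I would expand each factor in powers of $\zeta_1$, respectively $\zeta_2$, and denote the coefficients by $\mathcal R_m(x_1,\ldots,x_k)$ and $\widetilde{\mathcal R}_m(y_1,\ldots,y_n)$. By Proposition \ref{n-comm} these coefficients are absolutely convergent functions on the respective discs $|\zeta_a|\le R_a$, so Cauchy's inequality bounds them by $M_1R_1^{-m}$ and $M_2R_2^{-m}$ for suitable suprema $M_a$. Substituting these estimates and summing over $m$ turns \eqref{Z2n_pt_eps1q1} into a geometric series in $|\epsilon|/(R_1R_2)$, which converges whenever $|\epsilon|\le r<\rho_1\rho_2$; completeness of $\C^{k+n}$ together with density of the space of meromorphic functions then produces a meromorphic limit, the identification of the two annuli being exactly what creates the new poles at $x_i=y_j$. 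The covariance relations follow termwise: by the definitions \eqref{Z2n_pt_eps1qdef} and \eqref{Z2n_pt_eps1qdefkg} the operator $\partial_l$ acts on only one factor through the translation $T_l$, and summing over $l$ reassembles the global $T$, while $z^K$ distributes across both factors and collects to the global action because each factor already obeys \eqref{loconj}.

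The main obstacle is the compatibility with the combined series, and here Lemma \ref{obvlem} carries the weight of the argument. I would first relabel the two families of $G$-elements and parameters into a single $(k+n)$-entry family carrying $m+m'$ auxiliary series, introduce partition data $(l''_1,\ldots,l''_{k+n})$ and the associated projected building blocks, and form the nested projection sum for the product. The decisive fact is that this sum factorizes as the product of the two lower sums built from the individual factors; this is precisely what Lemma \ref{obvlem} establishes, by using the full $S_{k+n+m+m'}$-invariance of the $(k+n+m+m')$-point function to rearrange the inner $\Po_q$-projections and the dual pairing $(.,.)$ along the sewing. Bounding the combined sum by the product of the absolute values of the two factor sums then yields absolute convergence in the required polydisc, and choosing integers $\beta^{k+n}_{m+m'}$ with $\beta^k_m\,\beta^n_{m'}\le\beta^{k+n}_{m+m'}$ controls the pole orders, the only genuinely new poles being at $x_i=y_j$. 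The parallel statement for the $\Po_q$-projected correlation sums follows the same route, invoking Proposition \ref{correl-fn} for the convergence of each factor sum and once more Lemma \ref{obvlem} for the rearrangement.

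Finally, for the shuffle antisymmetry I would use the product decomposition $J^{-1}_{k+n;s}=J^{-1}_{k;s}\times J^{-1}_{n;s}$ of the shuffle set. Under it the antisymmetrized sum over $S_{k+n}$ appearing in \eqref{shushu} splits, by the very definition \eqref{Z2n_pt_epsss}, into a product of an antisymmetrized $S_k$-sum over the factor $\F(\ldots;g,\zeta_1)$ and an antisymmetrized $S_n$-sum over the factor $\F(\ldots;\overline{g},\zeta_2)$. Since each factor lies in $\M^k_m$, respectively $\M^n_{m'}$, and therefore separately satisfies \eqref{shushu}, each of the two sub-sums vanishes, so the whole product vanishes and \eqref{shushu} holds for the multiplication. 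Assembling the four verifications places the product in $\M^{k+n}_{m+m'}$. I expect the third step to be the crux: the bookkeeping of the partitions, projections and dual pairing is delicate, and it is only the symmetry-driven factorization furnished by Lemma \ref{obvlem} that reduces the joint convergence and pole-order estimates to the already-established ones for the two factors.
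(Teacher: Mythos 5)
Your write-up is mathematically faithful, but it is aimed at the wrong target: what you reproduce, nearly step for step, is the paper's proof of Proposition \ref{tolsto} itself (Cauchy estimates on the $\zeta_a$-expansions via Proposition \ref{n-comm}, termwise covariance for $\partial_l$ and $z^K$, the factorization of the combined-series sums via Lemma \ref{obvlem}, and the shuffle property via $J^{-1}_{k+n;s}=J^{-1}_{k;s}\times J^{-1}_{n;s}$). The paper does not prove the corollary this way; it treats it as immediate, with an empty proof, because Proposition \ref{tolsto} has already established $\cdot_\epsilon:\M^k_m\times\M^n_{m'}\to\M^{k+n}_{m+m'}$ for all $k$, $n\ge 0$, $m$, $m'\ge 0$, and Lemma \ref{cochainprop} has already established that each $\M^{k+n}_{m+m'}$ is a term of the chain-cochain complex on which $\delta^{k+n}_{m+m'}$ acts with $\delta\circ\delta=0$; the corollary is just the conjunction of these two statements. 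Your re-derivation is therefore redundant rather than wrong, but it does leave out the one ingredient that belongs to the corollary and not to the proposition: you never invoke the coboundary operators $\delta^n_m$ or Lemma \ref{cochainprop}, so the assertion that the product is compatible with the \emph{complex} structure (the ``extends the chain-cochain complex'' clause, which the paper grounds in \eqref{hatdelta} and Lemma \ref{cochainprop}) is never addressed in your argument. The efficient and complete proof is two lines: membership of the product in $\M^{k+n}_{m+m'}$ is Proposition \ref{tolsto}; the fact that this target sits inside the double complex $(\M^n_m,\delta^n_m)$ is Lemma \ref{cochainprop}.
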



\begin{thebibliography}{99}

\bibitem 
{FMS} Ph. Francesco, P. Mathieu, and D. Senechal, Conformal Field Theory. 
  Graduate Texts in Contemporary Physics. 1997. 

\bibitem
{Fuks1}
D. B. Fuks, {\it Cohomology of infinite-dimensional Lie algebras,}  
Contemporary Soviet Mathematics, Consultunt Bureau, New York, 198


\bibitem
{Huang} Y.-Zh. Huang, A cohomology theory of grading-restricted vertex algebras, 
Comm. Math. Phys. 327 (2014), no. 1, 279--307. 


\bibitem
{H2}
Y.-Zh. Huang, {\it Two-dimensional conformal geometry and vertex operator
algebras}, Progress in Mathematics, Vol. 148,
Birkh\"{a}user, Boston, 1997.

\bibitem
{K} {V. Kac}:    
 Infinite-dimensional Lie algebras. Third edition. Cambridge University Press, Cambridge, 1990. xxii+400 pp

\bibitem
{TUY} A. Tsuchiya, K. Ueno, and Y. Yamada, Y.: Conformal field
theory on universal family of stable curves with gauge symmetries, 
Adv. Stud. Pure. Math. \textbf{19} (1989), 459--566.


\bibitem
{Y} {A. Yamada, A.}. Precise variational formulas for abelian
differentials. Kodai Math.J. \textbf{3} (1980) 114--143.


\bibitem
{Zhu} Y. Zhu. Modular invariance of characters of vertex operator algebras. 
J. Amer. Math. Soc. 9(1), 237-302, (1996). 
\end{thebibliography}
\end{document}